\documentclass[a4paper]{amsart}
\usepackage[foot]{amsaddr}

\usepackage[latin1]{inputenc}
\usepackage{amssymb,amsmath,amsthm}
\usepackage{amsfonts}
\usepackage{amsaddr}
\usepackage{enumitem}
\usepackage{url}
\usepackage{hyphenat}
\usepackage{hyperref}
\hypersetup{colorlinks=true,linkcolor=[rgb]{0.2,0.2,0.2}}
\usepackage[english]{babel}

\theoremstyle{plain}
\newtheorem{theorem}{Theorem}[section]
\newtheorem{proposition}[theorem]{Proposition}
\newtheorem{lemma}[theorem]{Lemma}
\newtheorem{corollary}[theorem]{Corollary}
\theoremstyle{definition}
\newtheorem{definition}[theorem]{Definition}
\newtheorem{example}[theorem]{Example}
\newtheorem{problem}[theorem]{Problem}
\newtheorem{remark}[theorem]{Remark}

\newcommand{\card}[1]{\ensuremath{\lvert{#1}\rvert}}
\newcommand{\gen}[1]{\ensuremath{\langle{#1}\rangle}}

\newcommand{\genHS}[1]{\ensuremath{\langle{#1}\rangle^\mathrm{HS}}}

\begin{document}
\title{HS-stability and complex products in involution semigroups}

\author{Bertalan Bodor}
\author{Erkko Lehtonen}
\author{Thomas Quinn-Gregson}
\author{Nikolaas Verhulst}

\address[B. Bodor, E. Lehtonen, T. Quinn-Gregson, N. Verhulst]%
   {Technische Universit\"at Dresden \\
    Institut f\"ur Algebra \\
    01062 Dresden \\
    Germany}

\address[E. Lehtonen]%
   {Centro de Matem\'atica e Aplica\c{c}\~oes \\
    Faculdade de Ci\^encias e Tecnologia \\
    Universidade Nova de Lisboa \\
    Quinta da Torre \\
    2829-516 Caparica \\
    Portugal}
\address[N. Verhulst]
	{Delft Institute of Applied Mathematics\\
	Faculty of Electrical Engineering, Mathematics \& Computer Science\\
	TU Delft, Building 36 \\
    Mekelweg 4, \\
    2628 CD Delft \\
    The Netherlands}

\date{\today}

\begin{abstract}
When does the complex product of a given number of subsets of a group generate the same subgroup as their union? We answer this question in a more general form by introducing HS\hyp{stability} and characterising the HS\hyp{}stable involution subsemigroup generated by a subset of a given involution semigroup. We study HS\hyp{}stability for the special cases of regular ${}^{*}$\hyp{}semigroups and commutative involution semigroups.
\end{abstract}



\maketitle


\section{Motivation}
\label{sec:motivation}

The direct inspiration for this paper was the following question:
\begin{problem}\label{originalproblem}
Let $G$ be a group. Which subsets $S$ of $G$ satisfy $\gen{S^{-1} S} = \gen{S}$?
\end{problem}
This question arose naturally in the context of invariance groups, minors, and reconstructibility of certain multivariate functions
(see Proposition 4.4.15 and Problem 4.6.1 in \cite{Lehtonen-thesis} and Problem 7.2, Lemma 6.2, and Section 6 in \cite{Lehtonen-2019}).

Of course, it is clear that the inclusion $\gen{S^{-1} S} \subseteq \gen{S}$ always holds, but the converse does not, as the following example shows:

\begin{example}
Let $S_n$ be the symmetric group of degree $n\geq 2$ and let $S$ be a nonempty subset of $S_n$ that contains only odd permutations.
Since the inverse of an odd permutation is odd, $S^{-1} S$ contains only even permutations, so $\gen{S^{-1} S}$ must be a subgroup of the alternating group $A_n$.
However, $\gen{S}$ contains also odd permutations because the generators are odd, so the inclusion $\gen{S^{-1} S} \subset \gen{S}$ must be proper.
\end{example}

We found that this problem has the following rather satisfying solution (which will turn out to be an immediate consequence of Proposition~\ref{characterisation}):
\begin{corollary}\label{originalanswer}
A nonempty subset $S$ of a group $G$ satisfies $\gen{S^{-1} S} \neq \gen{S}$ if and only if $S$ is contained in a nontrivial left coset of a proper subgroup of $G$, i.e., there exists a $g \in G$ and a proper subgroup $G'$ of $\gen{S}$ such that $S \subseteq gG'$.\footnote{Recall that the \emph{trivial coset} of a subgroup is that subgroup itself. It is a well\hyp{}known fact that every left coset of a proper subgroup of a group is also a right coset of some proper subgroup. Therefore we could also write \emph{right coset} instead of \emph{left coset}.}
\end{corollary}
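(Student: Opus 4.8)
\emph{Proof plan.} Since the inclusion $\gen{S^{-1}S}\subseteq\gen{S}$ is automatic, the content of the statement is an equivalence describing exactly when this inclusion is strict, and the plan is to prove the two directions by hand. One expects the same conclusion to drop out of Proposition~\ref{characterisation} once a group is read as an involution semigroup under $x\mapsto x^{-1}$, so that $S^{-1}S$ is the relevant complex product; I would cross-check against that, but the direct argument below is short. The guiding idea is that the subgroup appearing on the right-hand side of the claimed equivalence can always be taken to be $G':=\gen{S^{-1}S}$ itself, with any fixed element of $S$ serving as the coset representative.

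For the backward implication, assume $S\subseteq gG'$ with $g\in G$ and $G'$ a proper subgroup of $\gen{S}$. Every element of $S^{-1}S$ then has the form $(gg_1)^{-1}(gg_2)=g_1^{-1}g_2\in G'$ with $g_1,g_2\in G'$, so $S^{-1}S\subseteq G'$, whence $\gen{S^{-1}S}\subseteq G'\subsetneq\gen{S}$ and in particular $\gen{S^{-1}S}\neq\gen{S}$. This also explains why the informal statement may speak of a \emph{nontrivial} coset: if $g\in G'$ then $S\subseteq G'$, forcing $\gen{S}\subseteq G'$ and contradicting properness.

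For the forward implication, suppose $\gen{S^{-1}S}\neq\gen{S}$ and set $G':=\gen{S^{-1}S}$, which is then a proper subgroup of $\gen{S}$. Using $S\neq\emptyset$, fix $s_{0}\in S$; for every $s\in S$ we have $s_{0}^{-1}s\in S^{-1}S\subseteq G'$, hence $s\in s_{0}G'$, so $S\subseteq s_{0}G'$, which is the desired containment.

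I do not expect a genuine obstacle here. The only points that need care are the choice $G'=\gen{S^{-1}S}$ together with the ``translation trick'' $s_{0}^{-1}S\subseteq\gen{S^{-1}S}$ for a single fixed $s_{0}\in S$, and the cosmetic reconciliation of the formal statement (a proper subgroup of $\gen{S}$) with the informal one (a nontrivial left coset of a proper subgroup of $G$), using the footnote's remark that such a left coset is also a right coset. If instead one deduces the corollary from Proposition~\ref{characterisation}, essentially all that remains is rephrasing that proposition's conclusion in terms of cosets in the group case.
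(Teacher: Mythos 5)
Your proof is correct and complete, but it takes a different route from the paper's. The paper never proves Corollary~\ref{originalanswer} directly: it obtains it by specialising Proposition~\ref{characterisation} to groups (where every subgroup is HS\hyp{}stable and the only hermitian square is the identity), yielding the group corollary in Section~\ref{grouplike}, and then setting $n=2$, $S_1=S^{-1}$, $S_2=S$, $G'=\gen{S}$; there the two conditions $S^{-1}\subseteq Ta_1$ and $S\subseteq a_1^{-1}T$ collapse to a single coset condition because $T=T^{-1}$. Your argument instead is elementary and self-contained: in both directions the witness subgroup can be taken to be $\gen{S^{-1}S}$ itself, and the ``translation trick'' $s_0^{-1}S\subseteq\gen{S^{-1}S}$ for a fixed $s_0\in S$ does all the work in the forward direction. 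What your approach buys is brevity and independence from the HS\hyp{}stability machinery; what the paper's approach buys is the demonstration that this corollary is the $n=2$ shadow of a much more general phenomenon, which is the whole motivation for the paper. Your side remarks are also accurate: the coset produced is automatically nontrivial (if $s_0\in G'$ then $S\subseteq G'$ would force $\gen{S}\subseteq G'\subsetneq\gen{S}$), and a proper subgroup of $\gen{S}$ is in particular a proper subgroup of $G$, so the formal and informal phrasings agree.
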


We quickly realised that the methods needed to solve this problem make but little use of the properties of groups, so we turned our attention to the following very natural generalisation of the original problem:
\begin{problem}\label{generalproblem}
Let $S$ be an involution semigroup. For which subsets $S_1, \dots, S_n$ of $S$ do we have $\gen{S_1S_2 \cdots S_n} = \gen{S_1 \cup S_2 \cup \cdots \cup S_n}$?
\end{problem}
In this form, the problem proved too hard for us. It turns out that the vital ingredient which makes Problem \ref{originalproblem} doable and Problem \ref{generalproblem} very hard is \emph{HS\hyp{}stability} (which we will introduce in Section \ref{grouplike}). If, in Problem \ref{generalproblem}, we consider not the involution semigroup generated by $S_{1}, \dots, S_{n}$ but the \emph{HS\hyp{}stable} involution semigroup generated by $S_{1},\dots,S_{n}$, a characterisation very much like that in Corollary \ref{originalanswer} is possible (cf.\ Proposition \ref{characterisation}). It will turn out that, for groups, HS\hyp{}stability is a trivial concept and our general characterisation will yield Corollary \ref{originalanswer} as a special case.

The later sections of this paper are dedicated to an attempt at understanding the concept of HS\hyp{}stability.
We obtain a necessary condition for an involution semigroup to be \emph{HS\hyp{}simple,} i.e., for it to have no proper HS\hyp{}stable involution subsemigroup, in terms of group morphic images (see Section~\ref{sec:trivial}).
We characterise the HS\hyp{}stable involution semigroup generated by a given subset of an involution semigroup (see Section~\ref{sec:genHS}) and showcase this result in the particular cases of regular ${}^{*}$\hyp{}semigroups and commutative involution semigroups (see Sections~\ref{sec:regular} and \ref{sec:commutative}, respectively).

We conclude the paper with a brief coda,  Section~\ref{sec:semilattice}, in which we consider Problem~\ref{generalproblem} for semilattices.
The tools of Section~\ref{grouplike} are of little use when dealing with semilattices because they are HS\hyp{}simple (see Corollary~\ref{cor:idempgen}), but the problem can be solved without much difficulty in a different way.


\section{Preliminaries}
\label{sec:preliminaries}

We assume the reader is familiar with the fundamentals of semigroup theory.
In this section we recall a few notions that will be used throughout the paper.
For general background and additional information on semigroups, we refer the reader to the monograph of Howie~\cite{Howie}.

If $(S, {\circ})$ is a semigroup and $A_1, \dots, A_n$ are subsets of $S$, we define the \emph{complex product} of the subsets as
\[
A_1 \cdots A_n := \{a_1 \cdots a_n \mid a_i \in A_i: \, 1 \leq i \leq n\}.
\]
where, as is usual, we have denoted the binary operation $\circ$ simply by juxtaposition. If, in particular, there is some $A\subseteq S$ with $A_{i}=A$ for all $1\leq i\leq n$ we write $A^{n}:=A_{1}\cdots A_{n}$. In this context, we will often denote a singleton by its single element.
For example, given a subgroup $H$ of a group $G$
and an element $g \in G$, the complex products $\{g\} H$ and $H \{g\}$ will be written simply as $gH$ and $Hg$, respectively; this coincides, both in notation and meaning, with the left and right cosets of $H$ in $G$ with respect to $g$.

If $(S, {\circ})$ is a semigroup and ${}^*$ is an involution, i.e., a unary operation ${}^{*} \colon S \to S$ for which the identities 
\[
  (x^*)^* = x \quad \text{and} \quad (xy)^* = y^* x^*
\] 
hold for all $x, y \in S$, we call $(S, {\circ}, {}^{*})$ an \emph{involution semigroup}. If $(G, {\circ}, {}^{-1}, 1)$ is a group, then $(G, {\circ}, {}^{-1})$ is clearly an involution semigroup.
Less trivially, the set of  $n\times n$    matrices over the complex numbers forms an involution semigroup with the natural multiplication and conjugate transposition as involution.

Let $(S, {\circ}, {}^*)$ be an involution semigroup.
A subset $T$ of $S$ is called an \emph{involution subsemigroup} if $T$ is closed under $\circ$ and ${}^*$, so that
\[
   x,y \in T \implies xy \in T \quad\text{and} \quad x \in T \implies x^* \in T. 
\] 
Given $T \subseteq S$, we denote by $\gen{T}$ the involution subsemigroup generated by $T$, i.e., the smallest involution subsemigroup of $S$ containing $T$.
It is well known that, if $S$ is a group, $\gen{T}$ is the subgroup of $S$ generated by $T$.

If $(S, {\circ}, {}^*)$ is an involution semigroup and $A \subseteq S$, we will write
\[
A^* := \{a^* \mid a \in A\}.
\]


\section{HS\hyp{}stability and the original problem}
\label{grouplike}

From now on, unless indicated otherwise, $S$ will always denote a semigroup which is endowed with an involution ${}^{*}$.

\begin{definition}
We call an element of the form $x x^{*}$ for some $x \in S$ a \emph{hermitian square,} and we let $H_S := \{x x^* \mid x \in S\}$ be the set of all hermitian squares of $S$.
An involution subsemigroup $T$ of $S$ is called \emph{HS\hyp{}stable} if 
\begin{enumerate}[label=(HS:\arabic*),labelwidth=1cm,labelsep=1cm,leftmargin=2cm,itemsep=1ex]
\item\label{HS1} $H_S \subseteq T$,
\item\label{HS2} $\forall h \in H_S \, \forall x, y \in S \colon xhy \in T \implies xy \in T$.
\end{enumerate}
For any subset $B \subseteq S$,
we denote by $\genHS{B}$ the smallest HS\hyp{}stable involution subsemigroup of $S$ containing $B$ and say that $\genHS{B}$ is \emph{generated} by $B$.
\end{definition} 

Note that $\genHS{B}$ is well defined.
Indeed, the whole involution semigroup $S$ is always HS\hyp{}stable and the intersection of HS\hyp{}stable involution subsemigroups is again HS\hyp{}stable, so $\genHS{B}$ is just the intersection of all HS\hyp{}stable involution subsemigroups containing $B$.

\begin{lemma}\label{specialform}
For any nonempty subsets $S_1, \dots, S_n$ of an involution semigroup $S$ and any $1 \leq k \leq n$ we have
\[
S_1 \cdots S_k S_k^{*} \cdots S_1^{*} \subseteq \genHS{S_1 \cdots S_n}.
\]
\end{lemma}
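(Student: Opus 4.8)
The plan is to work directly with $T := \genHS{S_1 \cdots S_n}$, using only that $T$ is an involution subsemigroup containing $S_1 \cdots S_n$ and that it satisfies \ref{HS2} (property \ref{HS1} will not actually be needed). I would fix a typical element $s_1 \cdots s_k s_k^* \cdots s_1^*$ of $S_1 \cdots S_k S_k^* \cdots S_1^*$, with $s_i \in S_i$ for $1 \le i \le k$, and try to exhibit it as the result of stripping a hermitian square out of an element already known to lie in $T$.

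First, the case $k = n$ is immediate: $a := s_1 \cdots s_n$ lies in $S_1 \cdots S_n \subseteq T$, hence so does $a^* = s_n^* \cdots s_1^*$, and therefore $a a^* = s_1 \cdots s_n s_n^* \cdots s_1^* \in T$. So assume $k < n$. Since $S_{k+1}, \dots, S_n$ are nonempty, I would pick $t_i \in S_i$ for $k+1 \le i \le n$, set $w := t_{k+1} \cdots t_n$, and consider $a := s_1 \cdots s_k w$. This belongs to $S_1 \cdots S_n \subseteq T$, and since $T$ is closed under $\circ$ and ${}^*$ we get $a a^* \in T$. Expanding, $a a^* = s_1 \cdots s_k \, (w w^*) \, s_k^* \cdots s_1^*$, and the key observation is that $w w^* = (t_{k+1} \cdots t_n)(t_{k+1} \cdots t_n)^*$ is a single hermitian square $h \in H_S$ — the padding factors fold up into one element of $H_S$, so that \ref{HS2} applies in a single step rather than by iteration.

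It then remains to apply \ref{HS2} with this $h$, with $x := s_1 \cdots s_k$ and $y := s_k^* \cdots s_1^*$ (each a single element of $S$): from $x h y = a a^* \in T$ we conclude $x y = s_1 \cdots s_k s_k^* \cdots s_1^* \in T$, and since $s_1, \dots, s_k$ were arbitrary this yields $S_1 \cdots S_k S_k^* \cdots S_1^* \subseteq T$. I do not expect a genuine obstacle here; the only points to watch are the degenerate case $k = n$ (where there is no hermitian square to remove) and the small remark that the reversed tail $t_{k+1} \cdots t_n t_n^* \cdots t_{k+1}^*$ is literally of the form $w w^*$, so that one application of \ref{HS2} suffices.
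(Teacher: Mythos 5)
Your argument follows the paper's proof in spirit, but as written it establishes a strictly weaker containment than the lemma claims. The set $S_1 \cdots S_k S_k^{*} \cdots S_1^{*}$ is, by the paper's definition, the complex product of the $2k$ sets $S_1, \dots, S_k, S_k^{*}, \dots, S_1^{*}$, so its typical element is $s_1 \cdots s_k t_k^{*} \cdots t_1^{*}$ with $s_i, t_i \in S_i$ chosen \emph{independently} in the two halves. You only treat the diagonal elements $s_1 \cdots s_k s_k^{*} \cdots s_1^{*}$, i.e., elements of the form $x x^{*}$ with $x \in S_1 \cdots S_k$. This is not a cosmetic omission: in the proof of Proposition~\ref{characterisation} the lemma is invoked for the element $(x_1 \cdots x_{i-1} y)(x_i^{*} \cdots x_1^{*})$ with $y \in S_i$ in general different from $x_i$ --- precisely an off-diagonal element --- so the diagonal case alone would not carry the application.

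The repair is immediate and preserves your structure. Given $s_i, t_i \in S_i$ for $1 \leq i \leq k$ and (for $k < n$) padding $w := g_{k+1} \cdots g_n$ with $g_j \in S_j$, set $a := s_1 \cdots s_k w$ and $b := t_1 \cdots t_k w$. Both lie in $S_1 \cdots S_n \subseteq T$, hence $a b^{*} = s_1 \cdots s_k \,(w w^{*})\, t_k^{*} \cdots t_1^{*} \in T$ because $T$ is an involution subsemigroup, and a single application of \ref{HS2} with the hermitian square $w w^{*}$ yields $s_1 \cdots s_k t_k^{*} \cdots t_1^{*} \in T$. For $k = n$ one has $a b^{*} \in T$ directly, with no hermitian square to strip; your remark that this degenerate case needs separate handling (the paper glosses over it) is a fair point, as is your observation that \ref{HS1} is never used and that the padding collapses to one hermitian square so that \ref{HS2} is applied only once.
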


\begin{proof}
Pick $x = s_1 \cdots s_k t_k^{*} \cdots t_1^{*}$ in the set on the left (with $s_i, t_i \in S_i$, $1 \leq i \leq k$) and fix some $g_j \in S_j$ for all $k < j \leq n$.
Put $y := g_{k+1} \cdots g_n$.
Then
\begin{align*}
s_1 \cdots s_k y y^* (t_1 \cdots t_k)^* &= (s_1 \cdots s_k y)(t_1 \cdots t_k y)^* \\
&= (s_1 \cdots s_k g_{k+1} \cdots g_n)(t_1 \cdots t_k g_{k+1} \cdots g_n)^{*}
\end{align*}
which is in
\begin{equation*}
(S_1 \cdots S_n)(S_1 \cdots S_n)^* \subseteq \gen{S_1 \cdots S_n} \leq \genHS{S_1 \cdots S_n}.
\end{equation*}
Since $y y^*$ is a hermitian square and $\genHS{S_1 \cdots S_n}$ is HS\hyp{}stable,
\[
x = (s_1 \cdots s_k)(t_1 \cdots t_k)^* \in \genHS{S_1 \cdots S_n}
\]
follows.
\end{proof}

 We want to know when $\genHS{S_1 S_2 \cdots S_n} = \genHS{S_1 \cup S_2 \cup \dots \cup S_n}$. Taking $S' := \genHS{S_1 \cup S_2 \cup \dots \cup S_n}$ in the following proposition  provides the answer.

\begin{proposition}\label{characterisation}
Let $S$ be an involution semigroup.
For any nonempty subsets $S_1, \dots, S_n$ of $S$ and for any involution subsemigroup $S'$ of $S$, the following are equivalent:
\begin{enumerate}[label={\upshape(\arabic*)}]
	\item\label{characterisation:1} $\genHS{S_1 \cdots S_n} \lneq S'$.
	\item\label{characterisation:2} There exists an HS\hyp{}stable $T \lneq S'$ and $a_1, \dots, a_{n-1} \in S$ with $a_{i-1} S_i a_i^{*} \subseteq T$ for all $1 < i < n$ as well as $S_1 a_1^{*} \subseteq T$ and $a_{n-1} S_n \subseteq T$.
\end{enumerate}
\end{proposition}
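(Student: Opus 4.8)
The plan is to establish the two implications separately, with Lemma~\ref{specialform} carrying essentially all the weight of the direction \ref{characterisation:1} $\Rightarrow$ \ref{characterisation:2}. For that direction I would take the witnessing subsemigroup to be $T := \genHS{S_1 \cdots S_n}$ itself: it is HS\hyp{}stable by definition, and $T \lneq S'$ is precisely the hypothesis \ref{characterisation:1}. It then remains to produce the elements $a_1, \dots, a_{n-1}$, and since each $S_i$ is nonempty one may fix some $g_i \in S_i$ for $1 \le i \le n$ and set $a_i := g_1 g_2 \cdots g_i$ for $1 \le i \le n-1$. For $1 < i < n$ one then has
\[
a_{i-1} S_i a_i^{*} = g_1 \cdots g_{i-1}\, S_i\, g_i^{*} g_{i-1}^{*} \cdots g_1^{*} \subseteq S_1 \cdots S_{i-1} S_i S_i^{*} S_{i-1}^{*} \cdots S_1^{*},
\]
which is contained in $\genHS{S_1 \cdots S_n} = T$ by Lemma~\ref{specialform} applied with $k = i$. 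The two boundary conditions are handled the same way: $S_1 a_1^{*} = S_1 g_1^{*} \subseteq S_1 S_1^{*} \subseteq T$ by Lemma~\ref{specialform} with $k = 1$, and $a_{n-1} S_n \subseteq S_1 \cdots S_{n-1} S_n = S_1 \cdots S_n \subseteq \gen{S_1 \cdots S_n} \subseteq T$ directly.

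For the converse \ref{characterisation:2} $\Rightarrow$ \ref{characterisation:1}, assume $T$ and $a_1, \dots, a_{n-1}$ are as in \ref{characterisation:2}. Since $T$ is HS\hyp{}stable, it suffices to prove $S_1 \cdots S_n \subseteq T$, for then minimality of $\genHS{\cdot}$ gives $\genHS{S_1 \cdots S_n} \subseteq T \lneq S'$, which yields \ref{characterisation:1}. So fix $s_i \in S_i$ for $1 \le i \le n$ and consider the product
\[
(s_1 a_1^{*})\,(a_1 s_2 a_2^{*})\,(a_2 s_3 a_3^{*}) \cdots (a_{n-2} s_{n-1} a_{n-1}^{*})\,(a_{n-1} s_n),
\]
each factor of which lies in $T$ by assumption, so that the whole product lies in $T$. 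Multiplying out, this product equals $s_1 (a_1^{*} a_1) s_2 (a_2^{*} a_2) s_3 \cdots s_{n-1} (a_{n-1}^{*} a_{n-1}) s_n$, and the key observation is that each bracketed factor is a hermitian square, since $a_i^{*} a_i = (a_i^{*})(a_i^{*})^{*} \in H_S$. Applying \ref{HS2} once for each of these $n - 1$ factors, peeling them off one after another, we obtain $s_1 s_2 \cdots s_n \in T$, as required.

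The argument is short, and the two points that need care are: recognising that the leftover factors $a_i^{*} a_i$ produced by the telescoping product are hermitian squares, which is exactly what licenses the use of \ref{HS2}; and, in the forward direction, choosing the $a_i$ as the partial products $g_1 \cdots g_i$ so that the inclusions demanded by \ref{characterisation:2} line up with the left\hyp{}hand sides $S_1 \cdots S_k S_k^{*} \cdots S_1^{*}$ furnished by Lemma~\ref{specialform}. One should also dispose of the degenerate case $n = 1$ --- where \ref{characterisation:2} is to be read simply as ``there is an HS\hyp{}stable $T \lneq S'$ with $S_1 \subseteq T$'' --- separately; there both implications are immediate, taking $T = \genHS{S_1}$ for \ref{characterisation:1} $\Rightarrow$ \ref{characterisation:2}.
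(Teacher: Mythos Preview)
Your proof is correct and follows essentially the same approach as the paper: in both directions the choice $T = \genHS{S_1 \cdots S_n}$, the choice of $a_i$ as partial products $g_1 \cdots g_i$, and the appeal to Lemma~\ref{specialform} all match. Your treatment of the middle case $1 < i < n$ in \ref{characterisation:1} $\Rightarrow$ \ref{characterisation:2} is in fact a bit more direct than the paper's: you observe immediately that $a_{i-1} S_i a_i^{*} \subseteq S_1 \cdots S_i S_i^{*} \cdots S_1^{*}$ and invoke Lemma~\ref{specialform} once, whereas the paper first places $h_{i-1} y h_i$ (with $h_j = a_j^{*} a_j$) inside $a_{i-1}^{*} T a_i$, multiplies on the outside by $a_{i-1}$ and $a_i^{*}$ to land in $H_S T H_S \subseteq T$, and then strips off the hermitian squares $h_{i-1}$, $h_i$ via \ref{HS2}.
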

\begin{proof}
\ref{characterisation:2} $\boxed{\Rightarrow}$ \ref{characterisation:1}:
Let $T$ and $a_{1}, \dots, a_{n-1}$ be as in \ref{characterisation:2}.
Then
\[
S_1 a_1^{*} a_1 S_2 a_2^{*} \cdots a_{n-2} S_{n-1} a_{n-1}^{*} a_{n-1} S_n \subseteq T^n \subseteq T.
\]
Since all $a_i^{*} a_{i}$ are hermitian squares and $T$ is HS\hyp{}stable, we can conclude from \ref{HS2} that any element of $S_1 \cdots S_n$ must also be in $T$, so $\genHS{S_1 \cdots S_n} \subseteq T$.

\ref{characterisation:1} $\boxed{\Rightarrow}$ \ref{characterisation:2}:
Put $T = \genHS{S_1 \cdots S_n} \lneq S'$.
Fix $x_i \in S_i$ for all $1 \leq i \leq n$, and set $a_i := x_1 \cdots x_i$ and $h_i := a_i^{*} a_i \in H_S$, for all $1 \leq i \leq n$.
By Lemma~\ref{specialform}, we have
\[
h_{i-1} y h_i = a_{i-1}^{*} (x_1 \cdots x_{i-1} y)(x_i^{*} \cdots x_1^{*}) a_i \in a_{i-1}^{*} T a_i
\]
for all $y \in S_i$ with $1 < i < n$.
Consequently,
\[
a_{i-1} h_{i-1} y h_{i} a_{i}^{*}
\in a_{i-1} a_{i-1}^{*} T a_i a_i^{*}
= h_{i-1}^{*} T h_{i}^{*}
\subseteq H_S T H_S
\subseteq T,
\]
which implies $a_{i-1} y a_i^{*} \in T$ because $h_{i-1}$ and $h_i$ are hermitian squares and $T$ is HS\hyp{}stable.
If $y \in S_1$, then applying Lemma~\ref{specialform} with $k = 1$ gives
\[
y a_1^{*} \subseteq T
\]
and if $y \in S_n$ we find
\[
a_{n-1} y = x_1 \cdots x_{n-1} y \in S_1 \cdots S_n \subseteq T.
\qedhere
\]
\end{proof}

As explained in Section~\ref{sec:preliminaries}, any group $G$ is an involution semigroup with the inverse operation ${}^{-1}$ as the involution, and the involution subsemigroups of $G$ are just the subgroups.
The only hermitian square is then the neutral element and every subgroup is HS\hyp{}stable.
Moreover, the conditions $a_{i-1} S_i a_i^{-1} \subseteq T$, $S_1 a_1^{-1} \subseteq T$, $a_{n-1} S_n \subseteq T$ are equivalent to $S_i \subseteq a_{i-1}^{-1} T a_i$, $S_1 \subseteq T a_1$, $S_n \subseteq a_{n-1}^{-1} T$, respectively.
Proposition~\ref{characterisation} then reduces to the following.

\begin{corollary}
Let $G$ be a group.
For any nonempty subsets $S_1, \dots, S_n$ of $G$ and for any subgroup $G' \leq G$, the following are equivalent:
\begin{enumerate}[label={\upshape(\arabic*)}]
	\item $\gen{S_1 \cdots S_n} \lneq G'$.
	\item There exists a subgroup $T \lneq G'$ and $a_1, \dots, a_{n-1} \in G$ with $S_i \subseteq a_{i-1}^{-1} T a_i$ for all $1 < i < n$ as well as $S_1 \subseteq T a_1$ and $S_n \subseteq a_{n-1}^{-1} T$.
\end{enumerate}
\end{corollary}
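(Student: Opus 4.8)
The plan is to specialise Proposition~\ref{characterisation} to the case $S = G$ with the involution being inversion, and then to observe that in this setting all the ``$\genHS{\phantom{x}}$'' and ``HS\hyp{}stable'' decorations become transparent while the inclusions of complex products can be rewritten by translating. So the proof is essentially a dictionary between the two statements.

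First I would record the two structural facts about groups that do the work. A nonempty subset of $G$ closed under $\circ$ and ${}^{-1}$ is a subgroup (pick $t$ in it; then $t^{-1}$, and hence $tt^{-1} = 1$, also lie in it), so the involution subsemigroups of $G$ are exactly its subgroups, and in particular $S'$ may be renamed $G'$. Next, the only hermitian square of $G$ is $gg^{-1} = 1$, so $H_G = \{1\}$: condition \ref{HS1} becomes $1 \in T$, which holds for every subgroup, and \ref{HS2} becomes ``$x1y \in T \implies xy \in T$'', which is vacuous. Hence every subgroup of $G$ is HS\hyp{}stable, and therefore $\genHS{B} = \gen{B}$ for every $B \subseteq G$, both sides being the smallest subgroup of $G$ containing $B$. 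This turns \ref{characterisation:1} into $\gen{S_1 \cdots S_n} \lneq G'$ and turns the clause ``there exists an HS\hyp{}stable $T \lneq S'$'' into ``there exists a subgroup $T \lneq G'$''.

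It then remains to rewrite the complex\hyp{}product inclusions of \ref{characterisation:2}. For fixed $g, h \in G$, left multiplication by $g$ and right multiplication by $h$ are bijections of $G$, so for subsets $A, B \subseteq G$ we have $gAh \subseteq B \iff A \subseteq g^{-1}Bh^{-1}$. Applying this with $B = T$ gives $a_{i-1} S_i a_i^{-1} \subseteq T \iff S_i \subseteq a_{i-1}^{-1} T a_i$ for $1 < i < n$, together with $S_1 a_1^{-1} \subseteq T \iff S_1 \subseteq T a_1$ and $a_{n-1} S_n \subseteq T \iff S_n \subseteq a_{n-1}^{-1} T$, which is precisely the condition in the corollary. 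Combining the three paragraphs yields the equivalence.

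I do not expect a genuine obstacle: the statement is a direct translation of Proposition~\ref{characterisation}. The only points needing a word of care are the (routine) verification that involution subsemigroups of a group coincide with its subgroups and are automatically HS\hyp{}stable, and the bookkeeping of the small cases $n = 1$ and $n = 2$, where some of the listed inclusions become empty conditions or coincide; these are handled by reading the index range $1 < i < n$ as empty when $n \leq 2$ and interpreting the displayed conditions accordingly.
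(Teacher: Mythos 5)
Your proposal is correct and matches the paper's own derivation: the authors likewise observe that the involution subsemigroups of a group are its subgroups, that $H_G=\{1\}$ makes every subgroup HS\hyp{}stable (so $\genHS{B}=\gen{B}$), and that the inclusions in Proposition~\ref{characterisation}\ref{characterisation:2} translate by left/right multiplication into the form stated in the corollary. No further comment is needed.
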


In the case when $n = 2$, $S_1 = S_2^{-1}$, and $G' = \gen{S_1 \cup S_2} =\gen{S_1}=\gen{S_2}$, this further reduces to Corollary~\ref{originalanswer}, answering Problem~\ref{originalproblem}.


\section{Group morphic images and HS\hyp{}stability}
\label{sec:trivial}

The characterisation from Section~\ref{grouplike} is useful if the involution semigroup under consideration is ``group\hyp{}like'' in the sense that there are few hermitian squares and therefore many HS\hyp{}stable involution subsemigroups.
However, it might happen that an involution semigroup has very few or indeed no proper HS\hyp{}stable involution subsemigroups at all, in which case Proposition~\ref{characterisation} becomes trivial.
We will say that an involution semigroup is \emph{HS\hyp{}simple} if it has no proper HS\hyp{}stable involution subsemigroups.

In this section we give a necessary condition for the HS\hyp{}simplicity of an involution semigroup.
The terminology surrounding  group morphic images will be vital to our main result (Theorem~\ref{thm: main}) that classifies HS\hyp{}stable involution  subsemigroups.

We will sometimes be interested in an involution semigroup considered only as a semigroup, i.e., we sometimes want to forget about the involution. Consequently, we need to be explicit in our terminology regarding homomorphisms between semigroups and between involution semigroups.

\begin{definition} 
Given involution semigroups $S$ and $T$, a map $\phi \colon S \to T$ is called
a \emph{$({\circ})$\hyp{}homomorphism} if we have $\phi(ab) = \phi(a) \phi(b)$ for all  $a, b \in S$.
If, additionally, $\phi(a^*) = \phi(a)^*$ for all $a \in S$, we call $\phi$ a \emph{$({\circ},{}^{*})$\hyp{}homomorphism.}
We call $T$ a \emph{$({\circ})$\hyp{}morphic image} (respectively \emph{$({\circ},{}^{*})$\hyp{}morphic image}) of $S$ if there is a surjective $({\circ})$\hyp{}homomorphism (respectively $({\circ},{}^{*})$\hyp{}homomorphism) from $S$ to $T$.
\end{definition}

\begin{lemma}\label{lemma:preimage} 
Let $\phi \colon S\to S'$  be a $({\circ},{}^{*})$\hyp{}homomorphism between   involution semigroups $S$ and $S'$, and let $T'$ be an HS\hyp{}stable involution subsemigroup of $S'$.
 Then $\phi^{-1}(T')=\{x\in S \mid \phi(x)\in T'\}$  is an HS\hyp{}stable involution subsemigroup of $S$.
 \end{lemma} 
 \begin{proof}
 Let $T=\phi^{-1}(T')$ and $x,y\in S$, $zz^*\in H_S$. 
 By the general properties of homomorphic preimages, $T$ is
an involution subsemigroup of $S$. Moreover, 
 \[ \phi(zz^*)=\phi(z)\phi(z^*)=\phi(z)\phi(z)^* \in H_{S'}\subseteq T',
 \]
 and so $zz^*\in T$. Hence \ref{HS1} holds. 
 Now suppose $xzz^*y\in T$, so that 
 \[ \phi(xzz^*y)=\phi(x)\phi(zz^*)\phi(y)\in T'. 
 \]
 Since $T'$ is HS\hyp{}stable and $\phi(zz^*)\in H_{S'}$ we have $\phi(x)\phi(y)=\phi(xy)\in T'$, so that $xy\in T$. Hence \ref{HS2} holds. 
\end{proof}  

\begin{corollary} \label{cor:preimage-simple}
An involution semigroup $S$ is HS\hyp{}simple if and only if every $({\circ},{}^{*})$\hyp{}homomorphic image of $S$ is HS\hyp{}simple. 
\end{corollary} 
\begin{proof}
$\boxed{\Rightarrow}$
Let $S'$ be a $({\circ},{}^{*})$\hyp{}homomorphic image of $S$, say  $\phi \colon S \twoheadrightarrow S'$. 
If $T'$ is an HS\hyp{}stable involution subsemigroup of $S'$ then $\phi^{-1}(T')=S$ by Lemma \ref{lemma:preimage} as $S$ is HS\hyp{}simple.
Hence $T'\supseteq \phi(\phi^{-1}(T'))=\phi(S)=S'$, and thus $T'=S'$\\ 
 $\boxed{\Leftarrow}$
Immediate, as $S$ is the $({\circ},{}^{*})$\hyp{}homomorphic image of the identity map.
\end{proof}

Of particular importance are \emph{group $({\circ})$-} and \emph{$({\circ},{}^{*})$\hyp{}morphic images,} that is, $({\circ})$- and $({\circ},{}^{*})$\hyp{}morphic images, respectively, that are groups.
Group $({\circ})$\hyp{}morphic images are well understood (see \cite{Gigon}).

\begin{remark}
\label{rem:2hom-inv}
Notice that if $\phi \colon S \to G$ is a $({\circ})$\hyp{}homomorphism between an involution semigroup and a group $(G, {\circ}, {}^{-1},1)$, then it preserves the involution if and only if $H_S \subseteq \phi^{-1}(1) = \{s \in S \mid \phi(s) = 1\}$.
This follows from the fact that 
\[
\phi(a^{*}) = \phi(a)^{-1} \quad\iff\quad \phi(a a^{*}) = \phi(a) \phi(a^{*}) = 1. 
\]
\end{remark}

The following pair of corollaries are immediate from Lemma~\ref{lemma:preimage} and Corollary~\ref{cor:preimage-simple}, since the trivial subgroup of a group is HS\hyp{}stable and hence no nontrivial group is HS\hyp{}simple.  

\begin{corollary}\label{lemma:inv map} 
Let $S$ be an involution semigroup and let $G$ be a group.
If $G$ is a $({\circ},{}^{*})$\hyp{}morphic image of $S$, say $\phi \colon S \twoheadrightarrow G$, then $\phi^{-1}(1)$ is an HS\hyp{}stable involution subsemigroup of $S$. 
\end{corollary} 

\begin{corollary}\label{corollary:simplenogroup}
If $S$ is an HS\hyp{}simple involution semigroup, then it has only trivial group $({\circ},{}^{*})$\hyp{}morphic images.
\end{corollary}

\begin{remark} The converse implication of Corollary \ref{corollary:simplenogroup} does not hold in general.
For example, if $S$ is an involution semigroup with a zero element $0$ and with $S \neq S^{2}$, then $S$ is not HS\hyp{}simple since $S^{2}$ is an HS\hyp{}stable proper subsemigroup by Lemma~\ref{lemma:commHS}.
However, the only possible group $({\circ},{}^{*})$\hyp{}morphic image of $S$ is the trivial one.
Indeed, assume $\phi$ is a $({\circ},{}^{*})$\hyp{}morphism from $S$ onto a group $G$.
It must map $0$ to $1$ and for any $g \in G$ there must be some $s_{g} \in S$ with $\phi(s_{g}) = g$.
Consequently,
\[
g = g1 = \phi(s_{g}) \phi(0) = \phi(s_{g}0) = 1
\]
for all $g \in G$; hence $G$ is trivial.
\end{remark} 

Next we give a necessary and sufficient condition for  an involution subsemigroup to equal $\phi^{-1}(1)$ for some surjective $({\circ},{}^{*})$\hyp{}morphism $\phi$  onto a group.  

For a subset $T$ of $S$, we define $T \omega := \{s \in S \mid \exists t \in T \colon st \in T\}$ and call it the \emph{closure} of $T$ (in $S$).

\begin{remark}
\label{rem:closure}
By the following lemma, $\omega$ is monotone in general and extensive over subsemigroups.
However, it is not necessarily idempotent over involution subsemigroups.
In fact, we will show in Example~\ref{example:regular} that there exists an involution subsemigroup $T$ with $T \omega \subsetneq (T \omega) \omega$. However, we will show in Corollary~\ref{cor:closedproperty2} that $\omega$ becomes a closure operator when restricted to involution subsemigroups $T$ satisfying a particular conjugation condition.
\end{remark}

\begin{lemma}\label{lemma:closureproperty}
Let $S$ be an involution semigroup with subsets $T$ and $T'$.
Then  
\[
T \subseteq T' \quad \implies \quad T \omega \subseteq T' \omega. 
\] 
Moreover, if $T$ forms a subsemigroup of $S$ then $T \subseteq T \omega$ and $T \omega \subseteq (T \omega) \omega$. 
\end{lemma}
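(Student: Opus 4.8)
The plan is to verify the three assertions in order, each by unwinding the definition $T\omega := \{s \in S \mid \exists t \in T \colon st \in T\}$.

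\medskip

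First, \emph{monotonicity}: suppose $T \subseteq T'$ and take $s \in T\omega$. Then there is some $t \in T$ with $st \in T$. Since $T \subseteq T'$, we have $t \in T'$ and $st \in T'$, so $s$ witnesses membership in $T'\omega$ via the same element $t$. Hence $T\omega \subseteq T'\omega$. This step is entirely routine.

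\medskip

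Second, \emph{extensivity over subsemigroups}: assume $T$ is a subsemigroup and take $s \in T$. We must produce $t \in T$ with $st \in T$; but since $T$ is closed under $\circ$, taking $t := s$ (or indeed any $t \in T$, which is nonempty as it contains $s$) gives $st = s^2 \in T$. Thus $s \in T\omega$, so $T \subseteq T\omega$.

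\medskip

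Third, \emph{the iterated inclusion} $T\omega \subseteq (T\omega)\omega$: here I would not invoke the previous clause directly, since $T\omega$ need not be a subsemigroup. Instead I argue from scratch. Let $s \in T\omega$, so there is $t \in T$ with $st \in T$. I claim $t \in T\omega$: indeed, $t \in T \subseteq S$ and $t \cdot t \in T$? No — this needs $T$ to be a subsemigroup again. The cleaner route: observe that $st \in T$ and $t \in T$ already show $st \in T\omega$ is not what we want; rather, to show $s \in (T\omega)\omega$ we need some $u \in T\omega$ with $su \in T\omega$. Take $u := t$. We need $t \in T\omega$ and $st \in T\omega$. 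For $st \in T\omega$: since $st \in T$ and $T \subseteq S$, pick the witness $t' \in T$ with $(st)t' \in T$ — but this again requires knowing such $t'$ exists. The correct observation is that $T \subseteq T\omega$ always holds when... hmm, actually $T \subseteq T\omega$ requires $T$ to be a subsemigroup. Since the hypothesis of the second part (that $T$ is a subsemigroup) is in force for this third claim as well, we do have $T \subseteq T\omega$. Then $t \in T \subseteq T\omega$ and $st \in T \subseteq T\omega$, so $t$ witnesses $s \in (T\omega)\omega$.

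\medskip

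The only subtle point — the \emph{main obstacle} worth flagging — is bookkeeping about which hypothesis applies to which clause: the inclusions $T \subseteq T\omega$ and $T\omega \subseteq (T\omega)\omega$ both need $T$ to be a subsemigroup, and the latter is then an easy consequence of the former combined with monotonicity (apply monotonicity to $T \subseteq T\omega$ to get $T\omega \subseteq (T\omega)\omega$). So in fact the third assertion follows formally from the first two without any new argument, and I would present it that way to keep the proof short. No genuine difficulty arises; the lemma is a warm-up establishing the basic formal properties of $\omega$, with the failure of idempotency (the interesting phenomenon) deferred to Example~\ref{example:regular}.
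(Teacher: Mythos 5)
Your proof is correct and follows essentially the same route as the paper: monotonicity by unwinding the definition, extensivity via $s^2 \in T$, and the iterated inclusion by applying monotonicity to $T \subseteq T\omega$. The exploratory digression in your third paragraph is unnecessary — the final streamlined observation you settle on is exactly the paper's argument.
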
 

\begin{proof}
Assume $T \subseteq T'$, and let $s \in T \omega$, so $st, t \in T$ for some $t \in T$.
Hence $st, t \in T'$, so $s \in T' \omega$, and we conclude that $T \omega \subseteq T' \omega$.

Suppose now that $T$ is a subsemigroup of $S$, and let $t \in T$. Then $tt, t \in T$ so that $t \in T \omega$; hence $T \subseteq T \omega$.
The inclusion $T \omega \subseteq (T \omega) \omega$ follows then immediately from the first result.
\end{proof}

\begin{remark}
Note that, if we drop the condition that $T$ forms a subsemigroup of $S$ in the second statement above, then $T$ need not be contained in $T \omega$.
For example, consider again the symmetric group $S_n$ of degree $n \geq 2$, and let $T$ be a nonempty subset that contains only odd permutations.
Then $T \omega$ contains only even permutations.
\end{remark} 

We let $E_S$ denote the set of idempotents of $S$.
We call a subset $T$ of $S$
\begin{itemize}
\item \emph{full} if $E_S \subseteq T$, 
\item \emph{closed} if $T = T \omega$, 
\item \emph{reflexive} if $ab \in T$ implies $ba \in T$,
\item \emph{dense} if for all $s \in S$ there exist $x, y \in S$ with $sx, ys \in T$. 
\end{itemize}

Let $T$ be a subsemigroup of $S$.
It follows from \cite[Theorem 2.4]{Gigon} that $T = \phi^{-1}(1)$ for some surjective $({\circ})$\hyp{}homomorphism $\phi \colon S \twoheadrightarrow G$ with $G$ a group if and only if $T$ is full, closed, reflexive, and dense.
For involution semigroups this result becomes the following: 

\begin{lemma}\label{cor:morphic} 
Let $S$ be an involution semigroup with an involution subsemigroup $T$.
Then $T = \phi^{-1}(1)$ for some surjective $({\circ},{}^{*})$\hyp{}homomorphism $\phi \colon S \twoheadrightarrow G$ with $G$ a group if and only if $T$ is closed and reflexive and $H_S \subseteq T$.
\end{lemma}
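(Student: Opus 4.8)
The plan is to reduce the involution-semigroup statement to the plain-semigroup result of Gigon quoted just above, by checking that the extra hypothesis $H_S \subseteq T$ is exactly what distinguishes a $(\circ,{}^*)$-homomorphism from a $(\circ)$-homomorphism (via Remark~\ref{rem:2hom-inv}), and that fullness and density come for free once $H_S \subseteq T$.

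For the forward direction, suppose $T = \phi^{-1}(1)$ for a surjective $(\circ,{}^*)$-homomorphism $\phi \colon S \twoheadrightarrow G$. Since $\phi$ is in particular a surjective $(\circ)$-homomorphism onto a group with kernel $T$, Gigon's theorem gives that $T$ is full, closed, reflexive, and dense; in particular $T$ is closed and reflexive. For $H_S \subseteq T$: by Remark~\ref{rem:2hom-inv}, the fact that $\phi$ preserves ${}^*$ forces $H_S \subseteq \phi^{-1}(1) = T$. So all three asserted conditions hold.

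For the converse, assume $T$ is a closed, reflexive involution subsemigroup with $H_S \subseteq T$. The first task is to upgrade $H_S \subseteq T$ to the remaining Gigon hypotheses. Fullness: every idempotent $e \in E_S$ satisfies $e = e e = e e^{**}$... but $e$ need not be hermitian, so instead note $ee^* \in H_S \subseteq T$ and $e^*e \in H_S \subseteq T$; I then want to conclude $e \in T$. Here reflexivity and closedness should do the work: from $e e^* \in T$ and reflexivity, $e^* e \in T$; applying closedness with the witness $e^* e \in T$ and the product $e \cdot (e^* e) = (e e^*) e \cdot$... one must be slightly careful, but the point is that $e$ lies in the closure of $T$ because $e \cdot e^* e \in T$ (as $e e^* e = (e e^*) e$, and $e e^* \in T$, $e \in$?)---this needs $e \in T$ already, so the cleaner route is: $e (e^* e) = e e^* e$, and since $e$ is idempotent, $e e^* e$ is a hermitian square $(e e^*)(e e^*)^* $? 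No: $(e e^*)(e e^*)^* = e e^* e e^* = e e^*$ using $e^2 = e$. So $e e^* e = e e^* \in T$ directly, whence $e \in T\omega = T$ by closedness (witness $e^* e \in T$, product $e(e^* e) = e e^* e = e e^* \in T$). Thus $E_S \subseteq T$. Density: given $s \in S$, we have $s s^* \in H_S \subseteq T$ and $s^* s \in H_S \subseteq T$, so taking $x = s^*$, $y = s^*$ witnesses density. Hence $T$ is full, closed, reflexive, and dense, and Gigon's theorem yields a surjective $(\circ)$-homomorphism $\phi \colon S \twoheadrightarrow G$ onto a group with $\phi^{-1}(1) = T$. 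Finally, since $H_S \subseteq T = \phi^{-1}(1)$, Remark~\ref{rem:2hom-inv} tells us $\phi$ automatically preserves the involution, i.e.\ $\phi$ is a $(\circ,{}^*)$-homomorphism, as required.

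The main obstacle I anticipate is the fullness step: pinning down precisely which of closedness/reflexivity is needed to pass from "$e e^*, e^* e \in T$" to "$e \in T$". The computation $e e^* e = e e^* \in H_S$ (valid because $e^2 = e$) is the key identity that makes it work via closedness alone; if that identity were unavailable one would genuinely need reflexivity as well. Everything else—density, the two applications of Remark~\ref{rem:2hom-inv}, and the invocation of Gigon's theorem—is routine bookkeeping, so the whole proof should be short.
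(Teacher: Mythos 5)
Your overall strategy is exactly the paper's: reduce to Gigon's characterisation by showing that $H_S \subseteq T$ together with closedness yields fullness and density for free, and use Remark~\ref{rem:2hom-inv} in both directions to pass between $({\circ})$- and $({\circ},{}^{*})$-homomorphisms. The forward direction, the density argument, and the two invocations of the remark are all correct and match the paper.

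The one genuine flaw is in the fullness step. The identity you lean on, $e e^* e = e e^*$ (equivalently your claim that $(ee^*)(ee^*)^* = ee^*ee^* = ee^*$ ``using $e^2=e$''), is false in general: idempotency of $e$ gives no adjacent pair to contract in $e e^* e$ or in $e e^* e e^*$. A concrete counterexample is the rectangular band of Example~\ref{example:rectangular}: with $e=(1,2)$ one has $e^*=(2,1)$, $ee^*=(1,1)$, but $ee^*e=(1,2)\neq(1,1)$. So the displayed equality ``$e(e^*e)=ee^*e=ee^*\in T$'' does not hold as written. The conclusion you want, however, survives by either of two small repairs. The cleanest (and the paper's choice) is to take the \emph{other} hermitian square as witness: $e\cdot(ee^*)=(ee)e^*=ee^*\in H_S\subseteq T$, so $e\in T\omega=T$ directly. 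Alternatively, keep your witness $e^*e$ but justify $ee^*e\in T$ correctly: since $e^*=(ee)^*=e^*e^*$ is idempotent, $ee^*e=e(e^*e^*)e=(ee^*)(e^*e)\in H_S^2\subseteq T$ because $T$ is a subsemigroup containing $H_S$. With either fix the proof is complete and coincides with the paper's argument.
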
 

\begin{proof} 
$\boxed{\Rightarrow}$
Let $T = \phi^{-1}(1)$ for some surjective $({\circ},{}^{*})$\hyp{}homomorphism $\phi$ from $S$ to a group $G$.
Then $\phi$ is a $({\circ})$\hyp{}homomorphism, and thus $T$ is closed and reflexive.
Since $\phi$ also preserves ${}^{*}$ we have $H_S \subseteq T$ by Remark~\ref{rem:2hom-inv}. 

$\boxed{\Leftarrow}$
It suffices to show that $T$ is dense and full.
If $s \in S$ then $s s^*, s^* s \in H_S \subseteq T$, so $T$ is dense. If $e \in E_S$ then $e(ee^*) = ee^*$, from which it follows that $E_S \subseteq H_S \omega$.
Since $T$ is closed we thus have 
\[
E_S \subseteq H_S \omega \subseteq T \omega = T
\] 
and so $T$ is full.
Hence there exists a $({\circ})$\hyp{}homomorphism $\phi \colon S \twoheadrightarrow G$ with $T = \phi^{-1}(1)$.
By Remark~\ref{rem:2hom-inv} the map $\phi$ preserves ${}^*$ since $H_S \subseteq \phi^{-1}(1)$. 
\end{proof}


\section{Finding the HS\hyp{}stable involution subsemigroup generated by a set}
\label{sec:genHS}

If $S$ is an involution semigroup and $T$ is an HS\hyp{}stable involution subsemigroup then the condition $x T x^* \subseteq T$ need not hold for all $x \in S$. For example, if $S$ is a group then all subgroups are HS\hyp{}stable, but non\hyp{}normal subgroups do not satisfy $x T x^{-1} \subseteq T$ for all $x \in S$.
We show in this section that a weakening of this condition together with a weakened closure condition is equivalent to HS\hyp{}stability.
We first require a couple of lemmas.

\begin{lemma}\label{lemma:uss}
Let $S$ be an involution semigroup and $T$ be an HS\hyp{}stable involution subsemigroup of $S$.
Then
\begin{enumerate}[label={\upshape(\roman*)}]
\item\label{lemma:uss:1} $E_S \subseteq T$.
\item\label{lemma:uss:2} $x H_S^2 x^* \subseteq T$ for each $x \in S$.  
\end{enumerate} 
\end{lemma}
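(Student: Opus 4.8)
The plan is to prove both statements by the same device: to put a given element $z$ into $T$, I will exhibit an element of the form $x h y$ with $h \in H_S$ that manifestly lies in $T$ and satisfies $xy = z$, and then invoke \ref{HS2}. Throughout, the only inputs are \ref{HS1} (so $H_S \subseteq T$) together with the fact that $T$, being an involution subsemigroup, is closed under $\circ$ and ${}^{*}$.

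For \ref{lemma:uss:1}, fix $e \in E_S$. The crucial preliminary observation is that $e e^{*} e$ is automatically in $T$: since $e^{2} = e$ we have $(e^{*})^{2} = (e^{2})^{*} = e^{*}$, so $(e e^{*})(e^{*} e) = e (e^{*})^{2} e = e e^{*} e$, and both factors $e e^{*}$ and $e^{*} e$ lie in $H_S \subseteq T$, hence so does their product. Now $e e^{*} e = e (e e^{*}) e$ (using $e^{2} = e$ once more), so \ref{HS2} with $x = y = e$ and $h = e e^{*}$ delivers $e = e e \in T$.

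For \ref{lemma:uss:2}, fix $x \in S$ and $h_{1}, h_{2} \in H_S$, and write $h_{2} = b b^{*}$. Since $h_{1}^{*} = h_{1}$, the symmetric product $h_{1} h_{2} h_{1} = (h_{1} b)(h_{1} b)^{*}$ is again a hermitian square, and therefore so is $x (h_{1} h_{2} h_{1}) x^{*} = (x h_{1} b)(x h_{1} b)^{*} \in H_S \subseteq T$. Viewing this last element as $(x h_{1} h_{2})\, h_{1}\, x^{*}$ with $h_{1} \in H_S$, \ref{HS2} yields $(x h_{1} h_{2}) x^{*} = x h_{1} h_{2} x^{*} \in T$; letting $h_{1}, h_{2}$ range over $H_S$ gives $x H_S^{2} x^{*} \subseteq T$.

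The only genuinely non-mechanical step is the preliminary observation in \ref{lemma:uss:1}: direct attempts to force $e$ itself into $T$ tend to be circular, since the elements one can build from $H_S$ without using \ref{HS2} have the shape $e e^{*}$, $e^{*} e$, $e e^{*} e$, and so on, never $e$, and \ref{HS2} only bites once a witness word $x h y \in T$ has been located. The point is that $e e^{*} e$ is such a witness, available for free because it factors through two hermitian squares, and cancelling the central $e e^{*}$ via \ref{HS2} produces $e$. Part \ref{lemma:uss:2} is then a routine variant of the same idea, the new ingredient being that a symmetric product $h_{1} h_{2} h_{1}$ of hermitian squares is itself a hermitian square.
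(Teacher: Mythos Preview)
Your proof is correct and follows essentially the same approach as the paper: part~\ref{lemma:uss:1} is identical, and part~\ref{lemma:uss:2} uses the same device of exhibiting a hermitian square of the form $x(\cdots)x^{*}$ and then stripping a factor via \ref{HS2}. Your observation that $h_{1} h_{2} h_{1} = (h_{1}b)(h_{1}b)^{*}$ is itself a hermitian square lets you finish with a single application of \ref{HS2}, whereas the paper starts from $x a a^{*} x^{*}$ with $a = h_{1} h_{2}$ and needs two.
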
 
 
\begin{proof} 
\ref{lemma:uss:1}
If $e \in E_S$ then $e^* = (ee)^* = e^* e^*$; hence $e^* \in E_S$.
By \ref{HS1} we have $ee^*, e^*e \in T$, and so 
\[
(ee^*)(e^*e) = ee^*e = (ee)e^*e = e(ee^*)e \in T,
\] 
so by \ref{HS2} we have $e = ee \in T$. 

\ref{lemma:uss:2}
Let $x \in S$ and $a = g g^* h h^* \in H_S^2$ be arbitrary. 
Then $x a a^* x^* = (xa)(xa)^* \in H_S \subseteq T$ by \ref{HS1}.
On the other hand we have
\[
x a a^* x^* = x a (gg^* hh^*)^* x^* = x a (h h^*)(g g^*) x^*,
\]
and so by applying \ref{HS2} to the bracketed hermitian squares we obtain $x a x^* \in T$ as required.
\end{proof}

\begin{corollary}
\label{cor:idempgen}
If $S = \gen{E_S}$, then $S$ is HS\hyp{}simple.
\end{corollary}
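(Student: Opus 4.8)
The plan is to show that every HS\hyp{}stable involution subsemigroup of $S$ must be all of $S$, which is immediate once we know such a subsemigroup contains every idempotent. So let $T$ be an arbitrary HS\hyp{}stable involution subsemigroup of $S$; our goal is to prove $T = S$.

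First I would invoke Lemma~\ref{lemma:uss}\ref{lemma:uss:1}, which tells us that $E_S \subseteq T$. Since $T$ is in particular closed under the semigroup operation $\circ$, it contains the subsemigroup $\geng{E_S}$ generated by $E_S$. By hypothesis $\geng{E_S} = S$, and hence $T \supseteq \geng{E_S} = S$, forcing $T = S$. As $T$ was an arbitrary HS\hyp{}stable involution subsemigroup, $S$ has no proper one, i.e., $S$ is HS\hyp{}simple.

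There is no real obstacle here: the entire weight of the argument rests on Lemma~\ref{lemma:uss}\ref{lemma:uss:1} (the fact that HS\hyp{}stability pulls in all idempotents, not merely all hermitian squares), and the rest is the trivial observation that a subsemigroup containing a generating set equals the whole semigroup. One should perhaps note explicitly that the empty set is not an issue — $S = \geng{E_S}$ presupposes $E_S \neq \emptyset$, so $T$ is nonempty — but otherwise the proof is a one\hyp{}line consequence of the preceding lemma.
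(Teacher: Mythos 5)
Your proof is correct and follows exactly the paper's argument: apply Lemma~\ref{lemma:uss}\ref{lemma:uss:1} to get $E_S \subseteq T$ for any HS\hyp{}stable involution subsemigroup $T$, and then conclude $S = \geng{E_S} \subseteq T$ since $T$ is a subsemigroup. Nothing is missing.
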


\begin{proof}
Assume $S = \gen{E_S}$, and let $T$ be an HS\hyp{}stable involution subsemigroup of $S$.
Then by Lemma~\ref{lemma:uss}\ref{lemma:uss:1} we have $E_S \subseteq T$, and so $S = \gen{E_S} \subseteq T$. 
\end{proof}

\begin{lemma}\label{lemma:fullclosed sub}
Let $S$ be an involution semigroup and let $T \subseteq S$ be such that
\begin{enumerate}[label={\upshape(\arabic*)}]
\item\label{lfs:1} $xH_S^2 x^*\subseteq T$ for each $x\in S$;
\item\label{lfs:2} $T \omega \cap S^2 = T \cap S^2$;  
\item\label{lfs:3} $T \setminus S^2 = T^* \setminus S^2$. 
\end{enumerate}
Then $T$ forms an involution subsemigroup of $S$ containing $H_S$.  
\end{lemma}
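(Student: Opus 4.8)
The plan is to check separately that $H_S\subseteq T$, that $T$ is closed under ${}^{*}$, and that $T$ is closed under the product; together these say exactly that $T$ is an involution subsemigroup containing $H_S$. The workhorse throughout is condition \ref{lfs:2}, used in the form: \emph{if $s\in S^{2}$ and $st\in T$ for some $t\in T$, then $s\in T$} (i.e.\ $s\in T\omega\cap S^{2}=T\cap S^{2}$). I will also use without comment that $zH_Sz^{*}\subseteq H_S$ for every $z\in S$ (since $z(gg^{*})z^{*}=(zg)(zg)^{*}$), that $H_S\subseteq S^{2}$, and that $S^{2}$ is closed under ${}^{*}$.

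First I would prove $H_S\subseteq T$. Fix $h=yy^{*}\in H_S$. Since $y^{*}y=(y^{*})(y^{*})^{*}\in H_S$ and $y^{*}y$ is hermitian, also $(y^{*}y)^{2}=(y^{*}y)(y^{*}y)^{*}\in H_S$; hence $(y^{*}y)^{2}=(y^{*}y)(y^{*}y)$ and $(y^{*}y)^{3}=(y^{*}y)(y^{*}y)^{2}$ both lie in $H_S^{2}$. Rewriting $h^{3}=y(y^{*}y)^{2}y^{*}$ and $h^{4}=y(y^{*}y)^{3}y^{*}$, condition \ref{lfs:1} gives $h^{3},h^{4}\in yH_S^{2}y^{*}\subseteq T$. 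Thus $h^{3}\in T$ and $h\cdot h^{3}=h^{4}\in T$, so $h\in T\omega$; since $h\in S^{2}$, condition \ref{lfs:2} yields $h\in T$.

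Next I would prove $T=T^{*}$. Let $x\in T$. If $x\in S^{2}$, then $x^{*}\in S^{2}$ and $x^{*}x\in H_S\subseteq T$ by the previous step; taking $t=x\in T$ we get $x^{*}t=x^{*}x\in T$, whence $x^{*}\in T\omega\cap S^{2}=T\cap S^{2}$. If instead $x\notin S^{2}$, then $x\in T\setminus S^{2}=T^{*}\setminus S^{2}$ by condition \ref{lfs:3}, so $x=u^{*}$ with $u\in T$ and $x^{*}=u\in T$. (As a by-product this gives $T\subseteq T\omega$: for $s\in T$ take $t=s^{*}\in T$, and $ss^{*}\in H_S\subseteq T$.)

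The substantial part is closure under the product: for $x,y\in T$ one must show $xy\in T$, and since $xy\in S^{2}$ this reduces by condition \ref{lfs:2} to finding $t\in T$ with $(xy)t\in T$. The naive choice $t=(xy)^{*}$ is worthless: it does give $(xy)(xy)^{*}\in H_S\subseteq T$, but by the previous paragraph $(xy)^{*}\in T$ is \emph{equivalent} to $xy\in T$, the very thing sought. The way out is to use condition \ref{lfs:1} in its full strength --- it sandwiches a product of \emph{two} hermitian squares between $z$ and $z^{*}$ --- together with the fact that $(xy)(xy)^{*}$ and $(xy)^{*}(xy)$ are hermitian squares. One looks for $t$ of the shape $t=c\,h_{1}h_{2}\,c^{*}(xy)^{*}$ with $h_{1},h_{2}\in H_S$ and $c\in S$: then automatically $(xy)t=(xyc)\,h_{1}h_{2}\,(xyc)^{*}\in(xyc)H_S^{2}(xyc)^{*}\subseteq T$ by condition \ref{lfs:1}, so the whole difficulty is to choose $c,h_{1},h_{2}$ --- out of $x$, $y$, and the hermitian squares above, so as to ``absorb'' the trailing factor $(xy)^{*}$ --- in such a way that $t$ itself has the form $z_{0}H_S^{2}z_{0}^{*}$ and hence lies in $T$ by a second application of condition \ref{lfs:1}. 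Making this simultaneous choice work is the step I expect to be the main obstacle. Once $xy\in T$ has been shown for all $x,y\in T$ nothing remains: products always lie in $S^{2}$, so condition \ref{lfs:3} is needed only for the ${}^{*}$-step above.
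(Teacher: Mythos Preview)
Your first two steps are correct; in fact your proof that $T=T^{*}$ is slightly cleaner than the paper's, which passes through the intermediate element $x^{*}xx^{*}$ rather than using $x^{*}\cdot x=x^{*}x\in H_S\subseteq T$ directly.

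The gap is in the closure under product, and it is a genuine gap: you describe an ansatz $t=c\,h_{1}h_{2}\,c^{*}(xy)^{*}$ and then concede that choosing $c,h_{1},h_{2}$ so that $t$ itself has the shape $z_{0}H_S^{2}z_{0}^{*}$ is ``the main obstacle,'' without resolving it. There is no indication that such a simultaneous choice exists, and your strategy of finding a single witness $t\in T$ with $(xy)t\in T$ in one shot is what makes the problem look hard. The paper avoids this entirely by a two-step descent, using the already-proved closure under ${}^{*}$: since $x,y\in T$ we have $x^{*},y^{*}\in T$; now
\[
(xyy^{*})\cdot x^{*}=(xy)(xy)^{*}\in H_S\subseteq T,
\]
so $xyy^{*}\in T\omega\cap S^{2}=T\cap S^{2}$; then
\[
(xy)\cdot y^{*}=xyy^{*}\in T
\]
with $y^{*}\in T$, so $xy\in T\omega\cap S^{2}=T\cap S^{2}$. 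The point you missed is that you do not need to exhibit the witness for $xy$ in one move: you may first push an auxiliary element (here $xyy^{*}$) into $T$ using $x^{*}$ as witness, and only then use $y^{*}$ as the witness for $xy$. Your dismissal of ``naive'' witnesses was premature --- the right naive witnesses are $x^{*}$ and $y^{*}$ separately, not $(xy)^{*}$ all at once.
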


\begin{proof}
Assume $T$ satisfies \ref{lfs:1}, \ref{lfs:2}, and \ref{lfs:3}; we first show that $H_S\subseteq T$. Let $g g^* \in H_S$. Then $g g^* (g g^* g g^* g g^*)$ and $g g^* g g^* g g^*$ are both elements of $g H_S^2 g^*$, and thus of $T$ by \ref{lfs:1}. 
Hence $g g^* \in T \omega$, and so $gg^*\in T$ by \ref{lfs:2}.

Now let $x \in T$.
If $x \notin S^2$ then we immediately get $x^* \in T$ by \ref{lfs:3}.
Suppose instead that $x = yz \in S^2$.
Then $x^* = z^* y^* \in S^2$, and
\[
x^* x x^* x = (x^* x)(x^* x)^* \in H_S \subseteq T
\]
by \ref{lfs:1}.
Since $x, x^* x x^* x \in T$, we have $x^* x x^* \in T \omega$.
Clearly $x^* x x^* \in S^2$, so $x^* x x^* \in T \omega \cap S^2 = T \cap S^2$ by \ref{lfs:2}.
Since $x x^* \in T$ we have $x^* \in T \omega$, and as $x^* \in S^2$ we get $x^* \in T$ by \ref{lfs:2}. 

Now suppose $x, y \in T$.
Then, as $(x y y^{*}) x^{*} = (xy)(xy)^{*} \in H_S \subseteq T$ and $x^* \in T$, we have that $x y y^{*} \in T \omega \cap S^2$, and so $x y y^* \in T$ by \ref{lfs:2}.
Similarly, $xy \in T \omega \cap S^2$ as $y^{*} \in T$ and so $xy \in T$ by \ref{lfs:2}.
Hence $T$ is an involution subsemigroup. 
\end{proof}

\begin{corollary} \label{cor:closedproperty2}
Let $S$ be an involution semigroup with involution subsemigroup $T$.
If $x H_S^2 x^* \subseteq T$ for each $x \in S$, then $T \omega$ is a closed involution subsemigroup containing $H_S$. 
\end{corollary}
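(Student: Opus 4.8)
The plan is to apply Lemma~\ref{lemma:fullclosed sub} to the set $U := T\omega$. Since $T$ is an involution subsemigroup, it is in particular a subsemigroup, so Lemma~\ref{lemma:closureproperty} gives $T \subseteq U$; consequently the hypothesis $x H_S^2 x^* \subseteq T$ already yields $x H_S^2 x^* \subseteq U$ for every $x \in S$, which is condition~\ref{lfs:1} for $U$. Thus, once I verify conditions~\ref{lfs:2} and~\ref{lfs:3} for $U$, Lemma~\ref{lemma:fullclosed sub} will tell me that $U = T\omega$ is an involution subsemigroup containing $H_S$. Moreover, condition~\ref{lfs:2} for $U$ reads $(T\omega)\omega \cap S^2 = T\omega \cap S^2$, which, together with the inclusion $T\omega \subseteq (T\omega)\omega$ furnished by Lemma~\ref{lemma:closureproperty} (now applicable because $T\omega$ has just been shown to be a subsemigroup), establishes that $T\omega$ is closed on $S^2$; a short supplementary argument then settles closedness on $S \setminus S^2$, and the corollary follows.

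So everything comes down to checking conditions~\ref{lfs:2} and~\ref{lfs:3} for $T\omega$. Condition~\ref{lfs:3}, namely $T\omega \setminus S^2 = (T\omega)^* \setminus S^2$, is the lighter of the two: since $T$ is closed under ${}^*$, applying ${}^*$ turns a right completion $st \in T$ (with $t \in T$) of an element $s$ into the relation $t^* s^* \in T$ (with $t^* \in T$), i.e.\ a left completion of $s^*$, and outside $S^2$ one checks that right and left completability coincide, again using the conjugation hypothesis to absorb the hermitian squares that appear; the same circle of ideas disposes of the elements of $(T\omega)\omega$ lying outside $S^2$ that are needed for closedness.

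The substantial point --- and the one already flagged in Remark~\ref{rem:closure} --- is condition~\ref{lfs:2}, the idempotency of $\omega$ on $S^2$: given $s \in (T\omega)\omega \cap S^2$, I must exhibit some $t \in T$ with $st \in T$. Unwinding the hypothesis yields $\tau \in T\omega$ with $s\tau \in T\omega$, hence elements $a \in T$ with $\tau a \in T$ and $b \in T$ with $s\tau b \in T$, and the task is to peel the two $T\omega$-layers back down to $T$. The tools are the conjugation condition $x H_S^2 x^* \subseteq T$ together with the elementary facts that every hermitian square $h$ satisfies $h^* = h$ and $h^2 \in H_S$, so that every sufficiently high power of a hermitian square lies in $H_S^2$ and, for instance, $(yy^*)^m = y(y^*y)^{m-1}y^* \in T$ for all $y \in S$ and all $m \geq 3$; these let one feed elements such as $(s\tau b)(s\tau b)^*$ and $(\tau a)(\tau a)^*$ through the conjugation condition and absorb them. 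I expect the main obstacle to be arranging the bookkeeping so that the parasitic factors contributed by $\tau$, $a$, and $b$ are exactly cancelled by a bona fide element of $T$, leaving behind a genuine right completion of $s$; this will likely require a judicious choice of the element to be conjugated, and perhaps an iteration that successively shortens the hermitian factors involved. With~\ref{lfs:2} (and~\ref{lfs:3}) in hand, the corollary is immediate from Lemma~\ref{lemma:fullclosed sub} as explained above.
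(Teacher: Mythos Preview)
Your overall plan matches the paper's: verify the hypotheses of Lemma~\ref{lemma:fullclosed sub} for $T\omega$, with the conjugation condition $xH_S^2x^* \subseteq T$ doing the heavy lifting. But the heart of the argument --- the closedness of $T\omega$ --- is left as speculation, and the speculation points in a more complicated direction than necessary. No iteration is needed. In your notation (so $\tau a,\, s\tau b \in T$ with $a,b \in T$), simply compute
\[
(s\tau b)\, b^*\, a\, (\tau a)^* \;=\; s\,\bigl(\tau\, b b^*\, a a^*\, \tau^*\bigr).
\]
The left side is a product of four elements of the involution subsemigroup $T$, hence lies in $T$; the bracketed factor on the right lies in $\tau H_S^2 \tau^* \subseteq T$. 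Thus $s \in T\omega$ for \emph{every} $s \in (T\omega)\omega$ --- no restriction to $S^2$ is needed, and this single line replaces your anticipated ``bookkeeping'' and ``iteration''. (Incidentally, Lemma~\ref{lemma:closureproperty} already yields $T\omega \subseteq (T\omega)\omega$ from the fact that $T$, not $T\omega$, is a subsemigroup, so there is no circularity.)

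Your sketch of condition~\ref{lfs:3} is also off target: the claim that ``right and left completability coincide outside $S^2$'' is not the mechanism, and it is unclear how you would establish it. The paper instead proves the stronger statement $(T\omega)^* = T\omega$ directly. Given $s \in T\omega$ with $st,\, t \in T$, note that $s^*\, t t^*\, t t^*\, s \in s^* H_S^2 (s^*)^* \subseteq T$; right\hyp{}multiplying by $t$ gives $s^*\cdot(tt^*tt^*st) \in T$, and since $t,\,t^*,\,st \in T$ the element $tt^*tt^*st$ also lies in $T$, whence $s^* \in T\omega$. Again this works uniformly across $S$, so the split into $S^2$ and its complement can be dropped entirely.
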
 

\begin{proof}
We first show that $T \omega = (T \omega) \omega$. Since $T$ is a subsemigroup of $S$ it follows from Lemma \ref{lemma:closureproperty} that $T \omega \subseteq (T \omega) \omega$.
For the converse inclusion, let $s \in (T \omega) \omega$, so there exists a $t \in T \omega$ such that $st \in T \omega$.
This in turn implies that there exist $v, w \in T$ such that $stv, tw \in T$.
Then 
\[
(stv) (v)^* (w) (tw)^* = s (t v v^* w w^* t^*) \in T
\] 
and $t v v^* w w^* t^* \in t H_S^2 t^* \subseteq T$.
Hence $s \in T \omega$ and $T\omega$ is therefore closed.

We now show that $T \omega$ satisfies the conditions of Lemma~\ref{lemma:fullclosed sub}.
 Condition \ref{lfs:1} follows from our hypothesis, since $T\subseteq T\omega$ by Lemma \ref{lemma:closureproperty}. 
Condition \ref{lfs:2} follows immediately from the closedness of $T \omega$.

The last condition, \ref{lfs:3}, follows immediately if we show that $T \omega = (T \omega)^*$.
Let $s \in T \omega$.
Then there exists a $t \in T$ such that $st \in T$.
By our hypothesis, $s^* t t^* t t^* s \in T$.
Since $t, t^*, st \in T$, we also have $s^* t t^* t t^* s t, t t^* t t^* s t \in T$.
This implies $s^* \in T \omega$; hence $(T \omega)^* \subseteq T \omega$.
Moreover, $T \omega = (T \omega)^{**} \subseteq (T \omega)^*$.
\end{proof}

\begin{lemma}\label{lemma:commHS}
Let $S$ be an involution semigroup and $T$ an involution subsemigroup of $S$.
Then $T$ is HS\hyp{}stable if and only if $T \cap S^2$ is HS\hyp{}stable.
In particular, $S^2$ is HS\hyp{}stable. 
\end{lemma}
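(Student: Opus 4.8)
The plan is to verify that each of the two defining properties \ref{HS1} and \ref{HS2} of HS\hyp{}stability holds for $T$ if and only if it holds for $T \cap S^2$; the ``in particular'' clause will then follow immediately by applying this equivalence to $T = S$. First I would record two routine facts: $S^2$ is an involution subsemigroup of $S$ (it is closed under $\circ$ by associativity and under ${}^{*}$ because $(ab)^{*} = b^{*}a^{*}$), so that $T \cap S^2$ is again an involution subsemigroup and it is meaningful to ask whether it is HS\hyp{}stable; and $H_S \subseteq S^2$, since $xx^{*} \in S^2$ for every $x \in S$.

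The key observation is that whenever $h \in H_S$ and $x, y \in S$, \emph{both} elements occurring in \ref{HS2} automatically lie in $S^2$: we have $xy \in S^2$ by definition of the complex product, and writing $h = gg^{*}$ gives $xhy = (xg)(g^{*}y) \in S^2$. Hence, for such $h, x, y$,
\[
xhy \in T \iff xhy \in T \cap S^2
\qquad\text{and}\qquad
xy \in T \iff xy \in T \cap S^2 .
\]

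From this the equivalence drops out. For \ref{HS1}: since $H_S \subseteq S^2$, the inclusion $H_S \subseteq T$ is equivalent to $H_S \subseteq T \cap S^2$. For \ref{HS2}: by the displayed equivalences, for every fixed $h \in H_S$ and $x, y \in S$ the implication ``$xhy \in T \Rightarrow xy \in T$'' is literally the same statement as ``$xhy \in T \cap S^2 \Rightarrow xy \in T \cap S^2$'', so quantifying over all $h, x, y$ shows that $T$ satisfies \ref{HS2} exactly when $T \cap S^2$ does. Therefore $T$ is HS\hyp{}stable if and only if $T \cap S^2$ is. Finally, the whole involution semigroup $S$ is always HS\hyp{}stable and $S \cap S^2 = S^2$, so taking $T = S$ in the equivalence just established yields that $S^2$ is HS\hyp{}stable.

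I do not expect any genuine obstacle here: the argument is a direct unwinding of the definitions. The only points that need even minimal attention are checking that $T \cap S^2$ is an involution subsemigroup (so that the statement for it makes sense) and that the products $xhy$ with $h \in H_S$ never leave $S^2$, which is what makes the membership conditions in \ref{HS2} transfer verbatim between $T$ and $T \cap S^2$.
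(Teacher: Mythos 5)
Your argument is correct and is essentially the paper's own proof: both rest on the observation that $H_S \subseteq S^2$ and that the elements $xhy$ and $xy$ appearing in \ref{HS2} always lie in $S^2$, so membership in $T$ and in $T \cap S^2$ coincide for all relevant elements, with the final claim obtained by taking $T = S$. Your explicit check that $T \cap S^2$ is an involution subsemigroup is a small but welcome addition that the paper leaves implicit.
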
 

\begin{proof}
Notice that $H_S \subseteq S^2$, that $x g g^* y \in T$ if and only if $x g g^* y \in T \cap S^2$, and that $xy \in T$ if and only if $xy \in T \cap S^2$, from which the first result follows.
The second statement follows by noting that $S^{2}$ is an involution subsemigroup and taking $T = S$.
\end{proof}

\begin{theorem} \label{thm: main}
Let $S$ be an involution semigroup and $T \subseteq S$.
Then $T$ is an HS\hyp{}stable involution subsemigroup if and only if
\begin{enumerate}[label={\upshape(\arabic*)}]
\item\label{main:1} $xH_S^2 x^*\subseteq T$ for each $x\in S$; 
\item\label{main:2} $T\omega \cap S^2 = T\cap S^2$; 
\item\label{main:3} $T\setminus S^2 = T^* \setminus S^2$. 
\end{enumerate}
\end{theorem}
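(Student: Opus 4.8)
The plan is to prove the two directions separately, relying heavily on the lemmas just established. The forward direction ($T$ HS-stable $\Rightarrow$ \ref{main:1}--\ref{main:3}) is almost immediate: \ref{main:1} is exactly Lemma~\ref{lemma:uss}\ref{lemma:uss:2}; \ref{main:3} holds trivially since an involution subsemigroup satisfies $T = T^*$, so in particular $T \setminus S^2 = T^* \setminus S^2$; and for \ref{main:2} the inclusion $T \cap S^2 \subseteq T\omega \cap S^2$ follows from $T \subseteq T\omega$ (Lemma~\ref{lemma:closureproperty}, as $T$ is a subsemigroup), while $T\omega \cap S^2 \subseteq T$ needs a short argument: if $s \in T\omega \cap S^2$, say $s = uv$ with $st \in T$ and $t \in T$ for some $t$, then $(st)t^* = s\,tt^* \in T$ with $tt^* \in H_S$, so applying \ref{HS2} with the hermitian square $tt^*$ (writing $s = uv$) gives $uv = s \in T$.

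For the converse, suppose $T$ satisfies \ref{main:1}--\ref{main:3}. By Lemma~\ref{lemma:fullclosed sub}, $T$ is an involution subsemigroup containing $H_S$, so \ref{HS1} holds. It remains to verify \ref{HS2}: given $h = gg^* \in H_S$ and $x,y \in S$ with $xhy \in T$, we must show $xy \in T$. First suppose $xy \in S^2$. Then $xhy \in S^2$ as well, and one computes a hermitian square witnessing that $xy \in T\omega$: from $xgg^*y \in T$ we get, multiplying on the right by $(xgg^*y)^* = y^*gg^*x^*$, that $(xgg^*y)(y^*gg^*x^*) = x(gg^*yy^*gg^*)x^* \in T$, and since $gg^*yy^*gg^* \in gH_S^2g^* \subseteq T$ we conclude $x(gg^*yy^*gg^*)x^* \in T$ with the middle factor in $T$, but this is the wrong shape. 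Instead the cleaner route is: $xhy \cdot (xy)^* \cdot (xy) \cdot (xhy)^*$-type manipulations, or more simply observe $(xhy)(xhy)^* = x h y y^* h x^* \in H_S$ and $xhyy^*hx^* = x(gg^* \cdot yy^* \cdot gg^*)x^* \in xH_S^2x^* \subseteq T$; combined with $xhy \in T$ this shows $xhy \in$ something. The actual mechanism I expect to use: from $xhy \in T$ and $(xhy)(xy)^*(xy)(xhy)^* \in H_S \subseteq T$ (a hermitian square of the form $z z^*$ with $z = xhy(xy)^*\cdot$something), deduce $xy(xy)^*\cdot$ lies in $T\omega \cap S^2 = T\cap S^2$, then peel off to get $xy \in T\omega \cap S^2 = T \cap S^2 = T$ (intersecting with $S^2$ is harmless since $xy \in S^2$). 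If instead $xy \notin S^2$, then $x$ and $y$ cannot both lie in $S^2$ — in fact $xy \notin S^2$ forces strong constraints; here one argues that $xhy \in S^2$ always (as $h \in S^2$), and uses \ref{main:2} together with the same hermitian-square trick to land $xy$ in $T\omega$, then \ref{main:3} to transfer between $T$ and $T^*$ outside $S^2$.

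The technical heart — and the step I expect to be the main obstacle — is producing, from the single hypothesis $xhy \in T$, an element of $T$ of the form $s \cdot (\text{product of hermitian squares}) \cdot s^*$ or a witness $st, t \in T$ showing $xy \in T\omega$, in a way that is uniform in whether $xy \in S^2$. The right identity is likely
\[
(xy)(xy)^* = (xhy)(xhy)^* \quad\text{after absorbing } h,
\]
which fails literally, so one must instead write something like $(xhy)\,\bigl(y^* g g^* g g^* y\bigr)\,(xhy)^*$, recognize the middle factor as lying in $y^* H_S^2 y \subseteq$ (after conjugation) $T$ via \ref{main:1}, and thereby exhibit $xy$ in the $\omega$-closure of $T$. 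Once $xy \in T\omega$ is secured, conditions \ref{main:2} and \ref{main:3} finish the argument: if $xy \in S^2$ then \ref{main:2} gives $xy \in T$ directly, and if $xy \notin S^2$ then we argue $(xy)^* \in T\omega \cap S^2$ (since $(xy)^*$ together with the above is a product) lands $(xy)^*$ in $T$, whence $xy = (xy)^{**} \in T^*$, and \ref{main:3} closes the gap. I would lay out the $xy \in S^2$ case in full and then note the $xy \notin S^2$ case follows by the symmetric manipulation using \ref{main:3}.
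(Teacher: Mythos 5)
Your overall architecture matches the paper's (Lemma~\ref{lemma:uss}\ref{lemma:uss:2} and $T=T^*$ for the easy parts of the forward direction, Lemma~\ref{lemma:fullclosed sub} plus an $\omega$\hyp{}witness for the converse), but both of the computations that actually carry the proof are wrong or missing. In the forward direction your argument for $T\omega\cap S^2\subseteq T\cap S^2$ fails: from $s=uv$, $st,t\in T$ you form $stt^*=uvtt^*\in T$ and invoke \ref{HS2} to delete $tt^*$, but \ref{HS2} only removes a hermitian square sitting \emph{between} two elements of $S$ (from $xhy\in T$ it yields $xy\in T$), and in $uvtt^*$ the factor $tt^*$ is trailing, not sandwiched -- there is no $y\in S$ to put after it, since $S$ need not have an identity. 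The paper has to work around exactly this by repositioning hermitian squares into the middle of the word: it shows $x(ytt^*y^*x^*x)y=(xyt)t^*(y^*x^*)(y^*x^*)^*\in T^2H_S\subseteq T$, notes that the middle block $ytt^*y^*x^*x=(yt)(yt)^*(x^*x)$ is a product of \emph{two} hermitian squares, and applies \ref{HS2} twice. Some such manoeuvre is unavoidable, and your proposal does not supply one.

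The converse direction is likewise incomplete: after correctly obtaining \ref{HS1} from Lemma~\ref{lemma:fullclosed sub}, you try several identities for \ref{HS2}, discard them as ``the wrong shape,'' and explicitly leave the key step open. The missing computation is short: from $xgg^*y\in T$ and $T=T^*$ one gets $(xgg^*y)^*\in T$, and then $xy\cdot(xgg^*y)^*=x(yy^*)(gg^*)x^*\in xH_S^2x^*\subseteq T$ by \ref{main:1}; these two memberships exhibit $xy\in T\omega$. Since $xy\in S^2$ automatically (it is a product of two elements of $S$, so your case split on $xy\notin S^2$ is vacuous), condition \ref{main:2} gives $xy\in T$. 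The theorem is true and your plan is the right one, but as written the proof has genuine gaps at its two decisive steps.
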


\begin{proof}
$\boxed{\Rightarrow}$
Let $T$ be an HS\hyp{}stable involution subsemigroup, so condition \ref{main:1} holds by Lemma~\ref{lemma:uss}\ref{lemma:uss:2}, and \ref{main:3} is immediate because $T = T^*$.

It remains to show \ref{main:2}.
The inclusion $T \cap S^2 \subseteq T \omega \cap S^2$ is immediate because $T \subseteq T \omega$ holds by Lemma~\ref{lemma:closureproperty}.
For the converse inclusion, let $s \in T \omega \cap S^2$, say $s = xy$ and $st, t \in T$.
Then, as $T$ is an involution subsemigroup, we have $t^* \in T$ and so 
\[
x (y t t^* y^* x^* x) y = (xyt) t^* (y^* x^* x y) = (xyt) t^* (y^* x^*) (y^* x^*)^* \in T^2 H_S \subseteq T  
\]
by \ref{HS1}.
However, $y t t^* y^* x^* x = (yt) (yt)^* (x^* x) \in H_S^2 \subseteq T$, and so $s = xy \in T$ by two applications of \ref{HS2}.
Hence $T \omega \cap S^2 \subseteq T \cap S^2$.

$\boxed{\Leftarrow}$ Let $T$ satisfy \ref{main:1}, \ref{main:2}, and \ref{main:3}. Then $T$ forms an involution subsemigroup of $S$ containing $H_S$ by Lemma~\ref{lemma:fullclosed sub}, so \ref{HS1} holds. 

Now let $x g g^* y \in T$, so $(x g g^* y)^* \in T$ as $T$ is closed under ${}^*$.
Then
\[
xy (x g g^* y)^* = x y y^* g g^* x^* \in x H_S^2 x^* \subseteq T 
\] 
and $xy \in T \omega \cap S^2$, so that $xy \in T$ by \ref{main:2}.
Hence \ref{HS2} holds, and $T$ is HS\hyp{}stable. 
\end{proof}

\begin{theorem}\label{thm: main_gen} 
Let $S$ be an involution semigroup and let $A \subseteq S$.
Then 
\[
\genHS{A} = (\gen{A \cup \bigcup_{x \in S} x H_S^2 x^*} \omega \cap S^2) \cup ((A \cup A^*) \setminus S^2).
\]
\end{theorem}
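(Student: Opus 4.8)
The plan is to abbreviate $B := A \cup \bigcup_{x \in S} x H_S^2 x^*$ and $C := \gen{B}$, so that the claimed right-hand side is $R := (C\omega \cap S^2) \cup ((A \cup A^*) \setminus S^2)$, and then to establish the two inclusions $\genHS{A} \subseteq R$ and $R \subseteq \genHS{A}$ separately. The first inclusion will be obtained by showing that $R$ itself is an HS\hyp{}stable involution subsemigroup containing $A$, using the characterisation in Theorem~\ref{thm: main}; the second will follow quickly from the HS\hyp{}stability of $\genHS{A}$ and the same theorem.

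First I would record a few preliminary observations. Since $abc = a(bc)$, any product of $n \geq 2$ elements of $S$ is a product of two, so $S^n \subseteq S^2$ for all $n \geq 2$; in particular each set $x H_S^2 x^*$ lies in $S^2$, whence $\bigcup_x x H_S^2 x^* \subseteq B \cap S^2 \subseteq C \cap S^2$. Moreover $A \cup A^* \subseteq C$, because $A \subseteq B \subseteq C$ and $C$ is an involution subsemigroup, and therefore $A \cup A^* \subseteq C\omega$ by Lemma~\ref{lemma:closureproperty}. Since $C$ is an involution subsemigroup with $x H_S^2 x^* \subseteq B \subseteq C$ for every $x \in S$, Corollary~\ref{cor:closedproperty2} applies and tells us that $C\omega$ is a \emph{closed} involution subsemigroup containing $H_S$; so $(C\omega)\omega = C\omega$, $(C\omega)^* = C\omega$, and $H_S \subseteq C\omega$. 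From these facts it follows at once that $R \subseteq C\omega$ and that $R \cap S^2 = C\omega \cap S^2$ (the summand $(A\cup A^*)\setminus S^2$ contributes nothing to the intersection with $S^2$), two identities I expect to use repeatedly.

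For $\genHS{A} \subseteq R$, I would verify conditions~\ref{main:1}--\ref{main:3} of Theorem~\ref{thm: main} for $R$, together with $A \subseteq R$. Containment of $A$ is immediate: an element of $A$ in $S^2$ lies in $C\omega \cap S^2$, and one outside $S^2$ lies in $(A\cup A^*)\setminus S^2$. Condition~\ref{main:1} holds because $x H_S^2 x^* \subseteq C \cap S^2 \subseteq C\omega \cap S^2 \subseteq R$. Condition~\ref{main:3} holds because $R \setminus S^2 = (A\cup A^*)\setminus S^2$ is visibly closed under ${}^*$, as $(S^2)^* = S^2$ and hence $R^* = (C\omega \cap S^2) \cup ((A\cup A^*)\setminus S^2) = R$. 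The substantive step is condition~\ref{main:2}, i.e.\ $R\omega \cap S^2 = R \cap S^2 = C\omega \cap S^2$. For $\supseteq$: given $s \in C\omega \cap S^2$, put $t := ss^*$; then $t \in H_S$ and $s t = s s s^*$ both lie in $C\omega \cap S^2 \subseteq R$ (using that $C\omega$ is a subsemigroup closed under ${}^*$), so $s \in R\omega$. For $\subseteq$: if $s \in R\omega \cap S^2$, choose $t \in R$ with $st, t \in R$; then $t \in R \subseteq C\omega$, and since $st \in S^2$ we get $st \in R \cap S^2 = C\omega \cap S^2 \subseteq C\omega$, so closedness of $C\omega$ gives $s \in (C\omega)\omega = C\omega$, and hence $s \in C\omega \cap S^2$.

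For the reverse inclusion, I would argue that $\genHS{A}$ is HS\hyp{}stable, so by Theorem~\ref{thm: main}\ref{main:1} it contains every $x H_S^2 x^*$, and it of course contains $A$; hence $B \subseteq \genHS{A}$ and therefore $C = \gen{B} \subseteq \genHS{A}$. Monotonicity of $\omega$ then yields $C\omega \cap S^2 \subseteq \genHS{A}\omega \cap S^2 = \genHS{A} \cap S^2$ by Theorem~\ref{thm: main}\ref{main:2}, while $(A\cup A^*)\setminus S^2 \subseteq A \cup A^* \subseteq \genHS{A}$; thus $R \subseteq \genHS{A}$, which completes the proof. I expect the only genuine obstacle to be the verification of condition~\ref{main:2} for $R$; the crux there is precisely the closedness of $C\omega$ provided by Corollary~\ref{cor:closedproperty2}, combined with the observation $R \subseteq C\omega$.
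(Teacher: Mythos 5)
Your proposal is correct and follows essentially the same route as the paper: define the candidate set $R$ (the paper's $K$), verify conditions \ref{main:1}--\ref{main:3} of Theorem~\ref{thm: main} using Corollary~\ref{cor:closedproperty2} to get closedness of $\gen{B}\omega$, and obtain the reverse inclusion from the HS\hyp{}stability of $\genHS{A}$. The only differences are cosmetic (e.g.\ you witness $s \in R\omega$ via $t = ss^*$ where the paper uses $t = s$ itself).
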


\begin{proof}
Let $A' := \gen{A \cup \bigcup_{x \in S} x H_S^2 x^*}$.
We first show that
\[
K := (A' \omega \cap S^2) \cup ((A \cup A^*) \setminus S^2)
\]
forms an HS\hyp{}stable involution subsemigroup that contains $A$.
Observe first that $A \subseteq A' \subseteq A' \omega$ holds by Lemma~\ref{lemma:closureproperty} since $A'$ is an involution subsemigroup of $S$.
Therefore,
\[
A = (A \cap S^2) \cup (A \setminus S^2) \subseteq (A' \omega \cap S^2) \cup ((A \cup A^*) \setminus S^2) = K.
\]

We check the conditions \ref{main:1}--\ref{main:3} from Theorem~\ref{thm: main}.
For each $x \in S$, we have $x H_S^2 x^* \subseteq A' \cap S^2 \subseteq A' \omega \cap S^2 \subseteq K$, so condition \ref{main:1} holds.
Condition \ref{main:3} follows from the facts that $K \setminus S^2 = (A \cup A^*) \setminus S^2$ and $x \in S^2$ if and only if $x^* \in S^2$.
In order to prove \ref{main:2}, notice that $K \subseteq A' \omega$.
Indeed, $A \subseteq A' \subseteq A' \omega$ and, as $A' \omega$ satisfies \ref{main:1}, it follows by Corollary~\ref{cor:closedproperty2} that $A' \omega$ is a closed involution subsemigroup of $S$, and in particular contains $A^*$.
Consequently,
\[
K \omega \cap S^2 \subseteq (A' \omega) \omega \cap S^2 = A' \omega \cap S^2 = K \cap S^2. 
\]
In order to prove the converse inclusion, let $x \in K \cap S^2 = A' \omega \cap S^2$.
Since $A' \omega$ is an involution subsemigroup of $S$, we have $x^2 \in A' \omega$ and clearly $x^2 \in S^2$, so $x^2 \in A' \omega \cap S^2 = K \cap S^2$.
It follows that $x \in (K \cap S^2) \omega \subseteq K \omega$ by the monotonicity of $\omega$, and hence $x \in K \omega \cap S^2$.
Therefore also \ref{main:2} holds, and we conclude that $K$ is an HS\hyp{}stable subsemigroup containing $A$; hence $\genHS{A} \subseteq K$.

It remains to show that $K \subseteq \genHS{A}$.
Since $\genHS{A}$ is an HS\hyp{}stable involution subsemigroup containing $A$ we have $A \cup \bigcup_{x \in S} x H_S^2 x^* \subseteq \genHS{A}$ by condition \ref{main:1} of Theorem~\ref{thm: main}, and hence $A'$ is an involution subsemigroup of $\genHS{A}$.
Applying the closure operation, and then intersecting with $S^2$ we obtain  
\[
A' \omega \cap S^2 \subseteq \genHS{A} \omega \cap S^2 = \genHS{A} \cap S^2,
\] 
where the first inclusion holds by the monotonicity of $\omega$ and the final equality holds by condition \ref{main:2} of Theorem \ref{thm: main}.
Since $\genHS{A}$ is closed under ${}^*$ we have $A \cup A^* \subseteq \genHS{A}$.
Hence $K \subseteq (\genHS{A} \cap S^2) \cup (\genHS{A} \setminus S^2) = \genHS{A}$.
\end{proof}

If $S=S^2$ then Theorems~\ref{thm: main} and \ref{thm: main_gen} simplify significantly:

\begin{corollary}\label{cor: main}
Let $S$ be an involution semigroup with $S = S^2$ and let $T \subseteq S$.
Then $T$ is an HS\hyp{}stable involution subsemigroup if and only if $T$ is closed and $x H_S^2 x^* \subseteq T$ for each $x \in S$. 
In particular, if $A \subseteq S$ then 
\[
\genHS{A} = \gen{A \cup \bigcup_{x \in S} x H_S^2 x^*} \omega.
\]
\end{corollary}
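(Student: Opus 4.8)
The plan is to obtain both assertions as direct specialisations of Theorems~\ref{thm: main} and \ref{thm: main_gen}, using the single observation that the hypothesis $S = S^{2}$ forces $S \setminus S^{2} = \emptyset$.

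First I would unpack Theorem~\ref{thm: main} under this hypothesis. Since $T \subseteq S = S^{2}$, we have $T \setminus S^{2} = \emptyset = T^{*} \setminus S^{2}$, so condition~\ref{main:3} holds vacuously for every $T \subseteq S$. Likewise, because $S^{2} = S$ and both $T$ and $T\omega$ lie inside $S$, the identity in condition~\ref{main:2}, namely $T\omega \cap S^{2} = T \cap S^{2}$, collapses to $T\omega = T$, i.e., to $T$ being closed, while condition~\ref{main:1} is unchanged. Hence Theorem~\ref{thm: main} reads: $T$ is an HS-stable involution subsemigroup if and only if $T$ is closed and $x H_{S}^{2} x^{*} \subseteq T$ for each $x \in S$, which is exactly the first claim. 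One could of course also re-derive this by specialising the proofs of Lemma~\ref{lemma:fullclosed sub} and Theorem~\ref{thm: main} directly (conditions \ref{lfs:3} and \ref{main:3} become vacuous, \ref{lfs:2} and \ref{main:2} become closedness), but quoting the two theorems is the cleanest route.

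For the ``in particular'' statement I would simply invoke Theorem~\ref{thm: main_gen}. Writing $A' := \gen{A \cup \bigcup_{x \in S} x H_{S}^{2} x^{*}}$ as there, that theorem gives
\[
\genHS{A} = (A'\omega \cap S^{2}) \cup \bigl((A \cup A^{*}) \setminus S^{2}\bigr).
\]
Under $S = S^{2}$ the second summand is empty, since $A \cup A^{*} \subseteq S = S^{2}$; and in the first summand the intersection with $S^{2} = S$ is redundant, since $A'\omega \subseteq S$ by the definition of the closure operator $\omega$. Thus $\genHS{A} = A'\omega = \gen{A \cup \bigcup_{x \in S} x H_{S}^{2} x^{*}}\omega$, as required.

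I do not anticipate a genuine obstacle: the corollary is a routine reading of the two main theorems in the special case $S = S^{2}$, and the only care needed is to verify the set-theoretic simplifications of conditions \ref{main:2} and \ref{main:3} (respectively of the two summands of Theorem~\ref{thm: main_gen}) precisely.
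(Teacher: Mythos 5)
Your proposal is correct and matches the paper's intent exactly: the corollary is stated there as an immediate specialisation of Theorems~\ref{thm: main} and \ref{thm: main_gen} (no separate proof is given), and your set-theoretic simplifications of conditions~\ref{main:2} and \ref{main:3} and of the two summands in Theorem~\ref{thm: main_gen} under $S = S^{2}$ are precisely the intended reading.
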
 

\begin{remark} 
We cannot replace $\bigcup_{x \in S} x H_S^2 x^*$ in the result above with the set of conjugates of hermitian squares $\bigcup_{x \in S} x H_S x^*$ of $S$ (or indeed with $H_S$), as we will show in Example~\ref{example:regular}.
In fact, we show that there exists an involution semigroup $S$ such that $S = S^2$ for which $\gen{H_S} \omega$ is equal to $\gen{\bigcup_{x \in S} x H_S x^*} \omega$ but is not HS\hyp{}stable, so 
\[
\genHS{H_S} = \gen{\bigcup_{x \in S} x H_S^2 x^*} \omega \supsetneq \gen{\bigcup_{x \in S} x H_S x^*} \omega = \gen{H_S} \omega. 
\]  
\end{remark} 

We end this section with a quick application of the above results in the case of involution semigroups $(S, {\circ}, {}^{*})$ with a zero element, denoted by $0$.
Notice that $0^{*} s = (s^{*} 0)^{*} = 0^{*} = (0 s^{*})^{*} = s0^{*}$ for every $s \in S$.
Since a semigroup contains at most one absorbing element, $0 = 0^{*}$ follows.

\begin{corollary}\label{cor: isgwithzero}
Let $S$ be an involution semigroup containing a zero element $0$ and $A \subseteq S$.
Then 
\[
\genHS{A} = S^2 \cup ((A \cup A^*) \setminus S^2).  
\] 
Consequently, $S$ is HS\hyp{}simple if and only if $S = S^2$. 
\end{corollary}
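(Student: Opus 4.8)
The plan is to deduce everything from Theorem~\ref{thm: main_gen}, exploiting the fact that an absorbing element makes the ``conjugated hermitian squares'' part of that formula swallow all of $S^2$. Set $A' := \gen{A \cup \bigcup_{x \in S} x H_S^2 x^*}$, so that Theorem~\ref{thm: main_gen} reads $\genHS{A} = (A' \omega \cap S^2) \cup ((A \cup A^*) \setminus S^2)$; it then suffices to prove $A' \omega \cap S^2 = S^2$.

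The first step I would carry out is to record that $0 \in A'$. Since $0 = 0^{*}$ (as noted just before the statement), $0 = 0 \cdot 0^{*} \in H_S$, hence $0 = 0 \cdot 0 \in H_S^2$; and as $0$ is absorbing, $x \cdot 0 \cdot x^{*} = 0$ for any $x \in S$ (and $S \neq \emptyset$ since $0 \in S$). Thus $0 \in x H_S^2 x^{*} \subseteq A'$.

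The second step is the identity $A' \omega \cap S^2 = S^2$. Here I would simply note that for every $s \in S$ we have $s \cdot 0 = 0 \in A'$ with $0 \in A'$, so $s \in A' \omega$ by the definition of $\omega$; hence $A' \omega = S$, and in particular $A' \omega \cap S^2 = S^2$. Substituting this into the formula from Theorem~\ref{thm: main_gen} yields exactly $\genHS{A} = S^2 \cup ((A \cup A^*) \setminus S^2)$.

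For the final clause, I would argue as follows. If $S = S^2$, apply the formula with $A = \emptyset$ to get $\genHS{\emptyset} = S^2 = S$; since $\genHS{\emptyset}$ is by definition the smallest HS-stable involution subsemigroup of $S$, there is no proper one, i.e.\ $S$ is HS-simple. Conversely, if $S \neq S^2$, then $S^2$ is an involution subsemigroup of $S$ (indeed $(S^2)^{*} = S^2$) which is proper and HS-stable by Lemma~\ref{lemma:commHS}, so $S$ is not HS-simple. I do not anticipate a genuine obstacle: the only point that needs care is that $0$ lies in some $x H_S^2 x^{*}$, which is immediate from the absorbing property, and everything else is routine bookkeeping on top of Theorem~\ref{thm: main_gen}.
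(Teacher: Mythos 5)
Your proposal is correct and follows essentially the same route as the paper: both apply Theorem~\ref{thm: main_gen}, observe that $0$ lies in $\bigcup_{x\in S} x H_S^2 x^*$ (the paper takes $x=0$ so that $0H_S^2 0^*=\{0\}$, you take arbitrary $x$ with $0\in H_S^2$), conclude that the $\omega$-closure of the generated subsemigroup is all of $S$, and settle the HS-simplicity clause via Lemma~\ref{lemma:commHS}. No gaps.
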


\begin{proof}
We first note that for any $B \subseteq S$, if $0 \in B$ then $B \omega = S$.
Indeed, $s0 = 0 \in B$, and so $s \in B \omega$ for any $s \in S$.
Hence as $0 H_S^2 0^* = \{0\}$, it follows by Theorem~\ref{thm: main_gen} that 
\[
\genHS{A} = (S \cap S^2) \cup ((A \cup A^*) \setminus S^2)
\] 
and the result follows. 

It is then immediate that, if $S = S^2$, then $S$ is HS\hyp{}simple.
The converse follows from the fact that $S^2$ is HS\hyp{}stable by Lemma~\ref{lemma:commHS}.
\end{proof}

\begin{corollary}
\label{cor:monoid-zero}
A monoid with involution containing a zero element is HS\hyp{}simple.
\end{corollary}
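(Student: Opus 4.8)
The plan is to reduce this immediately to Corollary~\ref{cor: isgwithzero}. The only thing one needs to observe is that a monoid with involution containing a zero element automatically satisfies $S = S^2$: if $1$ denotes the identity element, then every $s \in S$ can be written as $s = 1 \cdot s \in S^2$, so $S \subseteq S^2 \subseteq S$ and hence $S = S^2$.

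With that in hand, the result is a direct application of the second assertion of Corollary~\ref{cor: isgwithzero}, which states that an involution semigroup with a zero element is HS\hyp{}simple if and only if $S = S^2$. Since we have just verified $S = S^2$, we conclude that $S$ is HS\hyp{}simple. There is no real obstacle here; the entire content is the trivial remark that monoids are idempotent under the complex-product squaring operation, and the rest is quotation of the preceding corollary.
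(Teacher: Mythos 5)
Your proof is correct and follows exactly the paper's own argument: note that the identity element forces $S = S^2$, then invoke the second assertion of Corollary~\ref{cor: isgwithzero}. Nothing further is needed.
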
 
\begin{proof}
Since $S$ is a monoid, we have $x=x1\in S^{2}$ for any $x\in S$. Consequently, $S=S^{2}$ and Corollary~\ref{cor: isgwithzero} yields the desired result.
\end{proof}


\section{HS\hyp{}stability for regular \texorpdfstring{${}^{*}$}{*}\hyp{}semigroups} 
\label{sec:regular}

In this section we apply Theorem \ref{thm: main} to an important class of semigroups with involution: regular ${}^{*}$\hyp{}semigroups. 

Given $x \in S$, we call $x' \in S$ an \emph{inverse of $x$} if $x x' x = x$ and $x' x x' = x'$; the set of all inverses of $x$ will be denoted by $V(x)$.
A semigroup $S$ is \emph{regular} if every element has an inverse, and is \emph{orthodox} if further $E_S$ forms a subsemigroup of $S$.
A semigroup is \emph{inverse} if every element $x$ has a unique inverse, which we denote by $x^{-1}$.
The set of idempotents of an inverse semigroup $S$ forms a semilattice, that is, a commutative idempotent semigroup, and hence every inverse semigroup is orthodox.

An involution semigroup $S$ is called a \emph{regular ${}^{*}$\hyp{}semigroup} if $x^* \in V(x)$ for each $x \in S$ (noting that $(S, {\circ})$ forms a regular semigroup).
Semigroups with involution of this type were first studied by Nordahl and Scheiblich in \cite{Nordahl}.
Note that $S = S^2$ for a regular ${}^{*}$\hyp{}semigroup since $x = x (x^* x)$.
Every inverse semigroup forms a regular ${}^{*}$\hyp{}semigroup (with involution ${}^{-1}$), but the converse need not hold as the following example shows.

\begin{example} \label{example:rectangular} 
Let $I$ be a set and define a product on $S = I \times I$ by $(i, j) (k, \ell) = (i, \ell)$.
Then the unary map ${}^{*} \colon S \to S$ given by $(i, j)^{*} = (j, i)$ is an involution, and $V(x) = S$ for each $x \in S$, so that $S$ is a regular ${}^{*}$\hyp{}semigroup.
\end{example} 

A regular ${}^{*}$\hyp{}semigroup $S$ is called an \emph{orthodox} ${}^{*}$\hyp{}semigroup if $(S, {\circ})$ is orthodox.
The example above is clearly an orthodox ${}^{*}$\hyp{}semigroup since $S = E_S$. 

\begin{lemma} \label{H,E}
Let $S$ be a regular ${}^{*}$\hyp{}semigroup. Then
\begin{enumerate}[label={\upshape(\roman*)}]
\item\label{H,E:1} $H_S = \{e \in E_S \mid e^* = e\} \subseteq E_S$, with $H_S = E_S$ if and only if $S$ is inverse. 
\item\label{H,E:2} $H_S^2 = E_S$. 
\end{enumerate} 
Moreover, if $S$ is orthodox then
\begin{enumerate}[label={\upshape(\roman*)},resume]
\item\label{H,E:3} $x e x^* \in E_S$ for each $x \in S$ and $e \in E_S$. 
\end{enumerate}
\end{lemma}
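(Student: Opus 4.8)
The plan is to handle the three parts in sequence. Parts~\ref{H,E:1} and~\ref{H,E:2} both rest on the single observation that $x^*$ is an inverse of $x$ (so $xx^*x = x$), whereas part~\ref{H,E:3} is the only place where orthodoxy is genuinely needed, and I expect it to be the main obstacle.

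For part~\ref{H,E:1}, I would first prove the set equality $H_S = \{e \in E_S \mid e^* = e\}$ by short computations in both directions: given $h = xx^*$ one has $h^* = h$ at once, and $h^2 = (xx^*x)x^* = xx^* = h$ using $xx^*x = x$, so $h$ is a self\hyp{}conjugate idempotent; conversely a self\hyp{}conjugate idempotent $e$ is visibly $ee^* \in H_S$. For the equivalence of $H_S = E_S$ with $(S, {\circ})$ being inverse: if $(S, {\circ})$ is inverse then each $V(x)$ is a singleton, which forces $x^* = x^{-1}$ and hence $e^* = e^{-1} = e$ for every idempotent, giving $E_S \subseteq H_S$. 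For the converse I would invoke the standard fact that a regular semigroup is inverse precisely when its idempotents commute, and deduce commutativity of $E_S$ thus: for $e, f \in E_S = H_S$ we have $e^* = e$ and $f^* = f$, so $(ef)^* = fe$; since $(ef)^* \in V(ef)$, the identity $(ef)(fe)(ef) = ef$ holds, and collapsing the repeated factors on the left turns this into $(ef)^2 = ef$, so $ef \in E_S = H_S$ is self\hyp{}conjugate and therefore $ef = (ef)^* = fe$.

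For part~\ref{H,E:2}, the inclusion $H_S^2 \subseteq E_S$ is the same collapsing trick applied to two self\hyp{}conjugate idempotents $g$ and $h$, using $(gh)^* = hg \in V(gh)$. For the reverse inclusion I would exhibit, for each $e \in E_S$, the factorisation $e = (ee^*)(e^*e)$: the right\hyp{}hand side simplifies to $ee^*e$ after collapsing $e^*e^*$, and $ee^*e = e$ because $e^* \in V(e)$, while $ee^*$ and $e^*e$ are the hermitian squares of $e$ and $e^*$ respectively.

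For part~\ref{H,E:3}, the task is to check that $xex^*$ is idempotent, i.e.\ $(xex^*)^2 = xex^*$. Setting $q := x^*x$ (an idempotent), one expands $(xex^*)^2 = x\,e\,q\,e\,x^*$, and the trick is to rewrite the trailing $x^*$ as $qx^*$ (valid since $qx^* = x^*xx^* = x^*$), so that the block after the leading $x$ becomes $eqeq\,x^*$; here orthodoxy enters, because $eq$ is a product of two idempotents and hence itself idempotent, so $eqeq = eq$ and the block collapses back to $eqx^* = ex^*$, yielding $(xex^*)^2 = x(ex^*) = xex^*$. I expect this last part to be the main obstacle: it is the one step that genuinely uses $E_S$ being a subsemigroup, and the instinct to simplify $xex^*xex^*$ ``symmetrically'' leads nowhere — one has to feed in $x^*xx^* = x^*$ first and then $(eq)^2 = eq$, in exactly that order.
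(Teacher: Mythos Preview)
Your argument is correct in all three parts. The main difference from the paper's proof is one of presentation rather than of ideas: the paper dispatches the equivalence $H_S = E_S \Leftrightarrow S$ inverse by citing \cite[Lemma~1]{Easdown}, the inclusion $H_S^2 \subseteq E_S$ by citing \cite[Theorem~2.5]{Nordahl}, and part~\ref{H,E:3} by citing \cite[Proposition~6.2.2]{Howie}, whereas you supply direct computations for each. Your ``collapsing'' trick --- observing that for self\hyp{}conjugate idempotents $e,f$ one has $(ef)^* = fe \in V(ef)$ and hence $(ef)^2 = (ef)(fe)(ef) = ef$ --- is precisely the content of the cited Nordahl--Scheiblich result, and your commutativity argument for the ``only if'' direction is the standard route through the idempotents\hyp{}commute characterisation of inverse semigroups. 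Likewise your part~\ref{H,E:3} computation (inserting $x^* = qx^*$ and then using $(eq)^2 = eq$) is exactly the argument behind the Howie reference. So your proof is a self\hyp{}contained unpacking of what the paper delegates to the literature; the gain is independence from external sources, the cost is a few extra lines.
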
 

\begin{proof} 
\ref{H,E:1}
For every $x \in S$ we have $(x x^*) (x x^*) = (x x^* x) x^* = x x^*$.
Hence $x x^* \in E_S$, and $(x x^*)^* = (x^*)^* x^* = x x^*$, so $H_S \subseteq \{e \in E_S \mid e^{*} = e\}$.
For the other containment, let us assume that $e^* = e = e^2$.
Then $e = e e^* \in H_S$.
Therefore we have  $\{e \in E_S \mid e^* = e\}\subseteq H_S$.
The final claim is then immediate from \cite[Lemma 1]{Easdown}. 

\ref{H,E:2}
The fact that $H_S^2 \subseteq E_S$ follows from \ref{H,E:1} and \cite[Theorem 2.5]{Nordahl}.
If $e \in E_S$, then, recalling that $E_S^* = E_S$, we have 
\[
e = e e^* e = e (e^* e^*) e = (e e^*) (e^* e) \in H_S^2. 
\] 

\ref{H,E:3}
Follows from \cite[Proposition 6.2.2]{Howie}.
\end{proof}
 
\begin{definition} 
Given an involution subsemigroup $S$, we let $F_S := \{x e x^* \mid x \in S, e \in E_S\}$. 
\end{definition}

\begin{corollary}\label{thm:conjper} 
Let $S$ be a regular ${}^{*}$\hyp{}semigroup and $T \subseteq S$.
Then $T$ is an HS\hyp{}stable involution subsemigroup of $S$ if and only if $T$ is closed and $F_S \subseteq T$.
Consequently, 
\[
\genHS{T} = \gen{T \cup F_S} \omega. 
\] 
\end{corollary}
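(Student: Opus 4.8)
The plan is to read this off directly from Corollary~\ref{cor: main}, whose hypothesis ``$S = S^2$'' is satisfied by every regular ${}^{*}$\hyp{}semigroup (recall $x = x(x^*x)$). By that corollary, an involution subsemigroup $T$ of $S$ is HS\hyp{}stable if and only if $T$ is closed and $x H_S^2 x^* \subseteq T$ for every $x \in S$, and moreover $\genHS{A} = \gen{A \cup \bigcup_{x \in S} x H_S^2 x^*}\omega$ for any $A \subseteq S$. So everything reduces to translating the conjugated hermitian\hyp{}square products into the language of idempotents.

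The key observation I would make is that $\bigcup_{x \in S} x H_S^2 x^* = F_S$. Indeed, Lemma~\ref{H,E}\ref{H,E:2} gives $H_S^2 = E_S$, so for each $x \in S$ we have $x H_S^2 x^* = x E_S x^* = \{\, x e x^* \mid e \in E_S \,\}$, and taking the union over all $x \in S$ produces exactly $F_S = \{\, x e x^* \mid x \in S,\ e \in E_S \,\}$. In particular, the condition ``$x H_S^2 x^* \subseteq T$ for each $x \in S$'' is literally the same as ``$F_S \subseteq T$''. Substituting this into the two halves of Corollary~\ref{cor: main} (the second with $A := T$) yields at once that $T$ is an HS\hyp{}stable involution subsemigroup if and only if $T$ is closed and $F_S \subseteq T$, and that $\genHS{T} = \gen{T \cup \bigcup_{x \in S} x H_S^2 x^*}\omega = \gen{T \cup F_S}\omega$.

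I do not expect a genuine obstacle here: the corollary is essentially a restatement of the general structure theorem once one knows that $H_S$ consists of the self\hyp{}adjoint idempotents and that $H_S^2 = E_S$, and all of that content is already packaged in Lemma~\ref{H,E}. The only point deserving a line of care is to note that $\bigcup_{x \in S} x H_S^2 x^*$ is a union of sets, so that it coincides with $F_S$ as defined (rather than with a single conjugate $x H_S^2 x^*$); this matches the universally quantified form of the hypothesis in Corollary~\ref{cor: main}, so no extra argument is needed.
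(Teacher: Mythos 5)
Your proposal is correct and follows exactly the paper's own argument: invoke $S=S^2$ (from $x=x(x^*x)$), identify $\bigcup_{x\in S}xH_S^2x^*$ with $F_S$ via Lemma~\ref{H,E}\ref{H,E:2}, and read the statement off from Corollary~\ref{cor: main}. No gaps.
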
 

\begin{proof}
Since $H_S^2 = E_S$ by Lemma~\ref{H,E}\ref{H,E:2}, it follows that $F_S = \bigcup_{x \in S} x H_S^2 x^*$.
Hence, as $S = S^2$, the result is immediate from Corollary~\ref{cor: main}.
\end{proof}  

\begin{corollary}
Let $S$ be an orthodox ${}^{*}$\hyp{}semigroup and $T \subseteq S$.
Then $T$ is an HS\hyp{}stable involution subsemigroup of $S$ if and only if $T$ is closed and full.
Consequently, 
\[
\genHS{T} = \gen{T \cup E_S} \omega, 
\]  
and $E_S \omega$ is the minimal HS\hyp{}stable involution subsemigroup of $S$. 
\end{corollary}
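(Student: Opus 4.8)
The plan is to specialise Corollary~\ref{thm:conjper} to the orthodox setting, where the conjugates of idempotents no longer leave $E_S$. The key observation is Lemma~\ref{H,E}\ref{H,E:3}: in an orthodox ${}^{*}$\hyp{}semigroup, $xex^{*}\in E_S$ for all $x\in S$ and $e\in E_S$, so that $F_S\subseteq E_S$. Hence, for a subset $T$ with $E_S\subseteq T$, the condition $F_S\subseteq T$ is automatic, whereas conversely any HS\hyp{}stable involution subsemigroup contains $E_S$ by Lemma~\ref{lemma:uss}\ref{lemma:uss:1}. This is precisely what makes the condition ``$F_S\subseteq T$'' of Corollary~\ref{thm:conjper} collapse to ``$T$ full''.

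For the characterisation, I would argue as follows. If $T$ is an HS\hyp{}stable involution subsemigroup, then $T$ is closed by Corollary~\ref{thm:conjper}, and $E_S\subseteq T$ by Lemma~\ref{lemma:uss}\ref{lemma:uss:1}, so $T$ is closed and full. Conversely, if $T$ is closed and full, then $F_S\subseteq E_S\subseteq T$ by the observation above, so $T$ is closed with $F_S\subseteq T$, and Corollary~\ref{thm:conjper} then delivers that $T$ is an HS\hyp{}stable involution subsemigroup; note that the involution\hyp{}subsemigroup property is handed to us for free by that corollary, so no separate verification is needed.

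For the formula, Corollary~\ref{thm:conjper} gives $\genHS{T}=\gen{T\cup F_S}\omega$, and $F_S\subseteq E_S$ yields $\genHS{T}\subseteq\gen{T\cup E_S}\omega$. For the reverse inclusion, $T\cup E_S\subseteq\genHS{T}$ (using $T\subseteq\genHS{T}$ and $E_S\subseteq\genHS{T}$ from Lemma~\ref{lemma:uss}\ref{lemma:uss:1}), hence $\gen{T\cup E_S}\subseteq\genHS{T}$ since $\genHS{T}$ is an involution subsemigroup, and applying $\omega$ together with the closedness of $\genHS{T}$ gives $\gen{T\cup E_S}\omega\subseteq\genHS{T}$. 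Finally, since $E_S$ is an involution subsemigroup (a subsemigroup by orthodoxy, and $E_S^{*}=E_S$ holds in any involution semigroup), we have $\gen{E_S}=E_S$; taking $T=E_S$ in the formula gives $\genHS{E_S}=E_S\omega$, so $E_S\omega$ is HS\hyp{}stable, and every HS\hyp{}stable involution subsemigroup $T'$ contains $E_S$ and is closed, whence $E_S\omega\subseteq T'\omega=T'$. Thus $E_S\omega$ is the minimal one.

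I do not anticipate a genuine obstacle here: everything reduces to Corollary~\ref{thm:conjper}, Lemma~\ref{H,E}\ref{H,E:3}, and Lemma~\ref{lemma:uss}\ref{lemma:uss:1}. The only point requiring care is avoiding circularity — in the converse direction of the characterisation the involution\hyp{}subsemigroup property must be obtained \emph{from} Corollary~\ref{thm:conjper} rather than presupposed — and observing that ``full'' is genuinely the pertinent hypothesis: replacing it by ``$H_S\subseteq T$'' would be too weak, since in the non\hyp{}inverse case $H_S\subsetneq E_S$ and idempotents of the form $xex^{*}$ need not be hermitian squares.
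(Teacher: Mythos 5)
Your proof is correct and takes essentially the same route as the paper: specialise Corollary~\ref{thm:conjper} via Lemma~\ref{H,E}\ref{H,E:3}. The only (harmless) difference is that the paper asserts $F_S=E_S$ outright, whereas you work with $F_S\subseteq E_S$ and recover the remaining inclusion $\gen{T\cup E_S}\omega\subseteq\genHS{T}$ from Lemma~\ref{lemma:uss}\ref{lemma:uss:1}, monotonicity of $\omega$, and closedness of $\genHS{T}$.
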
 

\begin{proof} 
 Since $S$ is orthodox we have $F_S \subseteq E_S$ by  Lemma~\ref{H,E}\ref{H,E:3}. 
The claimed equivalence now follows from Corollary~\ref{thm:conjper} and Lemma~\ref{lemma:uss}\ref{lemma:uss:1}. 
For the second claim, it suffices to show that $E_S = \gen{E_S}$.
This follows from the fact that $S$ is orthodox and that $E_S^* = E_S$ for any involution semigroup.
\end{proof}
  
We note that the corollary above does not hold for general regular ${}^{*}$\hyp{}semigroups.
Indeed, we shall construct a regular ${}^{*}$\hyp{}semigroup $S$ in which $\gen{E_S} \omega$ is not an HS\hyp{}stable involution subsemigroup.

\begin{example}\label{example:regular} 
Let $G$ be a finite group with identity element $e$ and non\hyp{}normal subgroup $K$, that is, there exist $x \in G$ and $a \in K$ with $x a x^{-1} \notin K$.
Let $P$ be an $\mathbb{N} \times \mathbb{N}$ matrix with entries  $p_{i,j}$ ($i, j \in \mathbb{N}$) from $K$ and such that $p_{i,j} = p_{j,i}^{-1}$.
Suppose also $p_{i,1} = p_{1,i} = p_{i,i} = e$ for each $i \in \mathbb{N}$, and $p_{2,3} = a$.
On $S = \mathbb{N} \times G \times \mathbb{N}$, define a product by 
\[
(i, g, j) (k, h, \ell) = (i, g p_{j,k} h, \ell) 
\] 
and involution ${}^*$ by $(i, g, j)^* = (j, g^{-1}, i)$.
Then $S$ forms a regular ${}^{*}$\hyp{}semigroup, called a \emph{Rees matrix involution semigroup} (we refer the reader to \cite{Gerhard} for further information).
We consider $\gen{E_S} \omega$, noting that $\gen{E_S} \subseteq \{(i, h, j) \mid i, j \in \mathbb{N},\, h \in K\}$ by \cite{Howie78}.
Let $(i, g, j) \in \gen{E_S} \omega$, so that there exists $(k, h, \ell) \in \gen{E_S}$ such that
\[
(i, g, j) (k, h, \ell) = (i, g p_{j,k} h, \ell) \in \gen{E_S}. 
\] 
Hence $g p_{j,k} h \in K$, so that $g \in K h^{-1} p_{j,k}^{-1} \subseteq K$ since $h, p_{j,k} \in K$.
Thus $\gen{E_S} \omega \subseteq \{(i, h, j) \mid i, j \in \mathbb{N},\, h \in K\}$.
However $(2, p_{3,2}^{-1}, 3) = (2, a, 3) \in E_S$ and 
\[
(1, x, 1) (2, a, 3) (1, x^{-1}, 1) = (1, x a x^{-1}, 1) \in F_S.
\] 
By Lemma \ref{lemma:closureproperty} we have $F_S\subseteq \gen{F_S} \subseteq \gen{F_S}\omega$, and so $(1,xax^{-1},1)\in \gen{F_S}\omega$. However,  $x a x^{-1} \notin K$ so that $\gen{F_S}\omega$ is not contained in $\gen{E_S} \omega$. Since $\gen{F_S} \omega$ is the minimum HS\hyp{}stable involution subsemigroup of $S$ by Corollary~\ref{thm:conjper}, it follows that $\gen{E_S} \omega$ is not an HS\hyp{}stable involution subsemigroup of $S$ (and thus nor is $\gen{H_S} \omega$).  

This example also allows us to construct an involution subsemigroup $T$ such that $T \omega \neq (T \omega) \omega$, thus showing that $\omega$  in general is not a closure operator on involution subsemigroups of an involution semigroup (see Remark~\ref{rem:closure}).
Given $S$ as above, consider the involution subsemigroup $T = \{(1, e, 1), (1, e, 2), (2, e, 1),\linebreak[1] (2, e, 2)\}$ (note that $T$ is isomorphic to the involution semigroup given in Example~\ref{example:rectangular} with $\card{I} = 2$). 
Then as $p_{3,2} = a^{-1}$ we have 
\[
(2, a, 3) (2, e, 1) = (2, e, 1) \quad \text{and} \quad  (1, e, 3) (1, e, 1) = (1, e, 1), 
\] 
so that $(2, a, 3), (1, e, 3) \in T \omega$.
Also, $(1, a^{-1}, 1) (i, e, j) = (1, a^{-1}, j) \notin T$ for any $i, j \in \mathbb{N}$, and so $(1, a^{-1}, 1) \notin T \omega$.
However, $(1, a^{-1}, 1) (2, a, 3) = (1, e, 3)$ and so $(1, a^{-1}, 1) \in (T \omega) \omega$. Hence $T \omega \neq (T \omega) \omega$.
\end{example}

If $S$ is an orthodox ${}^{*}$\hyp{}semigroup then $E_S \omega = \phi^{-1}(1)$ where $\phi \colon S \twoheadrightarrow G$ is the \textit{greatest group $({\circ})$\hyp{}morphic image of $S$} by \cite[Theorem  4.5]{Gigon}. That is, for every $({\circ})$\hyp{}morphic image of $S$, say $\psi \colon S \twoheadrightarrow H$, there exists a $({\circ})$\hyp{}morphism $\sigma \colon G \rightarrow H$ such that $\psi = \sigma \phi$.
We refer the reader to \cite[Chapter 4]{Gigon} for a further study.  
 
\begin{corollary}
Let $S$ be an orthodox ${}^{*}$\hyp{}semigroup.
Then the following are equivalent:
\begin{enumerate}[label={\upshape(\arabic*)}]
\item\label{cor-star:1} $S$ is HS\hyp{}simple.
\item\label{cor-star:2} Every group $({\circ})$\hyp{}morphic image of $S$ is trivial.
\item\label{cor-star:4} $S=E_S\omega$. 
\end{enumerate}
Moreover, if $S$ is inverse then these are also equivalent to:
\begin{enumerate}[label={\upshape(\arabic*)},resume]
\item\label{cor-star:3} For every $x \in S$ there exists $e \in E_S$ with $e = xe = ex$. 
\end{enumerate}
\end{corollary}

\begin{proof}
Since $E_S \omega = \phi^{-1}(1)$ is the minimal HS\hyp{}stable involution subsemigroup, the equivalence of \ref{cor-star:1}, \ref{cor-star:2} and \ref{cor-star:4} is immediate. 

Now let $S$ be inverse, so that $e^{-1} = e$ for every $e \in E_S$.

\ref{cor-star:3} $\boxed{\Rightarrow}$ \ref{cor-star:1}.
Let $x \in S$, so that there exists $e \in E_S$ with $xe = e$.
Hence $x \in E_S \omega$ and $S=E_{S}\omega$.

\ref{cor-star:1} $\boxed{\Rightarrow}$ \ref{cor-star:3}.
Assume $S = E_S \omega$. Then for any $x \in S$ there exist $e, f \in E_S$ with $xe = f$.
Then $e x^{-1} = f^{-1} = f$, and so $e x^{-1} x e = ff = f$, and hence $e x^{-1} x = f$ as $x^{-1} x \in E_{S}$ and as $E_S$   is commutative.
Consequently, 
\[
xf = x (e x^{-1} x) = x (x^{-1} x e) = xe = f  \quad \text{and} \quad fx = (e x^{-1}) x = f.  
\qedhere
\] 
\end{proof}

\begin{remark} 
If $S$ is an orthodox ${}^{*}$\hyp{}semigroup with $E_S$ forming an HS\hyp{}stable involution subsemigroup, then $E_S = E_S \omega$.
This later condition is a well\hyp{}studied property known as \emph{E\hyp{}unitarity}.
The structure of E\hyp{}unitary regular ${}^{*}$\hyp{}semigroups is given in \cite{Imaoka}.
For example, the free inverse monoid on a set $X$ is $E$\hyp{}unitary, and so $E_S = \{1\}$ is an HS\hyp{}stable involution subsemigroup.  
\end{remark}


\section{HS\hyp{}stability for commutative involution semigroups} 
\label{sec:commutative}

In this section we consider commutative involution semigroups.
Every commutative semigroup comes equipped with an involution, namely the identity map $x^* = x$; such involution semigroups are called \emph{semigroups with trivial involution.} Conversely, every semigroup with trivial involution is clearly commutative. 

For commutative semigroups with trivial involution we have $E_S \subseteq H_S = \{s^2 \mid s \in S\}$.
This fails to hold for general commutative semigroups with involution; take for example the 3\hyp{}element non\hyp{}chain semilattice $Y = \{x, y, 0\}$ with $xy = x0 = y0 = 0$.
Then the map $x^* = y$, $y^* = x$ and $0^* = 0$ can be shown to be an involution, and so $E_Y = Y \neq H_Y = \{0\}$.
Note that $Y$ does not form a regular ${}^{*}$\hyp{}semigroup when equipped with this involution. 

Note also that a commutative involution semigroup $S$ may have $S \neq S^2$.
For example, in $(\mathbb{N}, {+})$ with trivial involution we have $1 \notin \mathbb{N}^2 = \{2, 3, \dots\}$.\footnote{We follow the convention that $\mathbb{N}$ stands for the set of strictly positive integers.}

Since $S$ is commutative, every subset is reflexive, and hence it follows from Lemma~\ref{cor:morphic} that an involution subsemigroup $T$ of $S$ is equal to $\phi^{-1}(1)$ for some surjective  $({\circ},{}^{*})$\hyp{}homomorphism  
$\phi \colon S \twoheadrightarrow G$  onto a group $G$
if and only if $T$ is closed and $H_S \subseteq T$.

\begin{theorem} \label{thm:comm}
Let $S$ be a commutative involution semigroup and let $T \subseteq S$.
Then the following are equivalent: 
\begin{enumerate}[label={\upshape(\arabic*)}]
\item\label{thm:comm:1} $T$ is an HS\hyp{}stable involution subsemigroup of $S$.
\item\label{thm:comm:2} $H_S \subseteq T$, $T \omega \cap S^2 = T \cap S^2$ and $T \setminus S^2 = T^* \setminus S^2$.

Moreover, these conditions imply: 
\item\label{thm:comm:3} There exists a surjective $({\circ},{}^{*})$\hyp{}homomorphism $\phi \colon S \twoheadrightarrow G$ with $G$ a group such that $T\omega = \phi^{-1}(1)$.  
\end{enumerate}
\end{theorem}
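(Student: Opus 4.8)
The plan is to prove the cycle $\ref{thm:comm:1} \Rightarrow \ref{thm:comm:2} \Rightarrow \ref{thm:comm:3} \Rightarrow \ref{thm:comm:1}$, relying heavily on the general machinery already in place and on the fact that commutativity collapses $x H_S^2 x^*$ down to something manageable. The key simplification to note at the outset is that, for a commutative involution semigroup, $x H_S^2 x^* = x^2 H_S^2 \subseteq H_S \cdot H_S \cdot H_S \subseteq \gen{H_S}$ for every $x \in S$ — more precisely $x (gg^*)(hh^*) x^* = (xg)(xg)^* (hh^*) \in H_S^2 \subseteq \gen{H_S}$, so that $\bigcup_{x\in S} x H_S^2 x^* \subseteq \gen{H_S}$ and hence $\gen{A \cup \bigcup_{x\in S} x H_S^2 x^*} = \gen{A \cup H_S}$ whenever $H_S \subseteq A$. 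This means conditions~\ref{main:1}--\ref{main:3} of Theorem~\ref{thm: main} reduce, in the commutative case, to exactly the three clauses in~\ref{thm:comm:2}: given $H_S \subseteq T$, condition~\ref{main:1} holds automatically because $x H_S^2 x^* \subseteq \gen{H_S} \subseteq \gen{T}$... but wait, we need $x H_S^2 x^* \subseteq T$, not just $\subseteq \gen{T}$ — so the first implication $\ref{thm:comm:1}\Rightarrow\ref{thm:comm:2}$ should instead be read off directly from Theorem~\ref{thm: main} (an HS-stable involution subsemigroup satisfies \ref{main:1}--\ref{main:3}, and \ref{main:1} together with $H_S \subseteq H_S^2$... actually $H_S \subseteq T$ is immediate from \ref{HS1}), and the content is all in the reverse direction.

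So concretely: $\ref{thm:comm:1} \Rightarrow \ref{thm:comm:2}$ is immediate from Theorem~\ref{thm: main}, since \ref{main:2} and \ref{main:3} are literally two of the three clauses and $H_S \subseteq T$ follows from \ref{HS1}. For $\ref{thm:comm:2} \Rightarrow \ref{thm:comm:3}$, the plan is to first pass to $T \cap S^2$: by Lemma~\ref{lemma:commHS} it suffices (after checking the involution subsemigroup structure) to produce the group morphism on the part inside $S^2$ and then extend. More carefully, I would argue that \ref{thm:comm:2} implies $T$ is an involution subsemigroup containing $H_S$ via Lemma~\ref{lemma:fullclosed sub} (whose hypothesis \ref{lfs:1} is met because, as computed above, $x H_S^2 x^* \subseteq H_S^2 \subseteq \gen{H_S}$, and we need this $\subseteq T$ — here one uses that $\gen{H_S} \cap S^2 \subseteq T \omega \cap S^2 = T \cap S^2$, since $\gen{H_S} \subseteq T\omega$ as $H_S \subseteq T$ and $T\omega$ is a subsemigroup containing $T$; and $\gen{H_S} \subseteq S^2$ because every hermitian square lies in $S^2$). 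Then $T \cap S^2$ is a closed (in $S^2$, hence one checks in $S$) involution subsemigroup of the commutative involution semigroup $S^2$ with $H_S = H_{S^2} \subseteq T \cap S^2$; since $S$ is commutative every subset is reflexive, so Lemma~\ref{cor:morphic} applies to give a surjective $({\circ},{}^*)$-homomorphism $\psi \colon S^2 \twoheadrightarrow G$ with $\psi^{-1}(1) = T \cap S^2$. The remaining task is to extend $\psi$ to all of $S$ using clause~\ref{main:3} ($T \setminus S^2 = T^* \setminus S^2$) to handle the elements outside $S^2$; this extension step is where I expect the real work to be, and it will likely mirror an argument of Gigon / the proof strategy behind Theorem~\ref{thm: main_gen}, building $G$ as a quotient of the free-ish completion of $S^2$ by $S$ or observing that $S^2$ is an ideal and using a Rees-type / ideal-extension argument to push the congruence up to $S$.

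Finally, $\ref{thm:comm:3} \Rightarrow \ref{thm:comm:1}$ is exactly Lemma~\ref{lemma:inv map}: the preimage of the identity under a $({\circ},{}^*)$-homomorphism onto a group is always HS-stable, with no commutativity needed. So the whole theorem hinges on the single genuinely new implication $\ref{thm:comm:2} \Rightarrow \ref{thm:comm:3}$, and within that, on the extension of the group morphism from the ideal $S^2$ to $S$; I expect the cleanest route is to not construct the morphism on $S^2$ first but rather to verify directly, via Theorem~\ref{thm: main} and its corollaries, that the set $K$ from Theorem~\ref{thm: main_gen} applied to $A = T$ equals $T$ (using \ref{thm:comm:2}), conclude $T = \genHS{T}$ is HS-stable, and then — now knowing $T$ is a closed, reflexive involution subsemigroup containing $H_S$ — invoke Lemma~\ref{cor:morphic} after separately checking closedness of the full $T$ (not just $T \cap S^2$), which is precisely what clauses~\ref{main:2} and~\ref{main:3} together buy us: if $s \in T\omega$ then either $s \in S^2$, so $s \in T$ by \ref{main:2}, or $s \notin S^2$, and then $st, t \in T$ with $st$ possibly in $S^2$; a short case analysis using $st \in T \cap S^2 \subseteq T$ and $s^* \in$ ... forces $s \in T$ via \ref{main:3}. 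That case analysis for closedness of $T$ on the complement of $S^2$ is the one spot I would grind through carefully rather than wave at.
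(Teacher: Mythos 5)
Your implications \ref{thm:comm:1}$\Rightarrow$\ref{thm:comm:2} and \ref{thm:comm:3}$\Rightarrow$\ref{thm:comm:1} coincide with the paper's (read off Theorem~\ref{thm: main} and Lemma~\ref{lemma:inv map}, respectively), and your key observation about commutativity is the right one, though you can say more than $x H_S^2 x^* \subseteq \gen{H_S}$: since $xx^*yy^* = (xy)(xy)^*$ one has $H_S^2 \subseteq H_S$, whence $x H_S^2 x^* = xx^* H_S^2 \subseteq H_S^3 \subseteq H_S \subseteq T$ outright. This meets hypothesis \ref{lfs:1} of Lemma~\ref{lemma:fullclosed sub} directly, with no detour through $T\omega \cap S^2$ or through $\gen{T}$.

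The gap is in \ref{thm:comm:2}$\Rightarrow$\ref{thm:comm:3}. Both of your routes end by applying Lemma~\ref{cor:morphic} to $T$ itself, which requires $T$ to be closed, and the ``short case analysis'' you defer --- showing that $s \in T\omega \setminus S^2$ forces $s \in T$ --- cannot be carried out: condition \ref{thm:comm:2} controls $T\omega$ only inside $S^2$ and says nothing about elements of $T\omega$ lying outside $S^2$. Concretely, $T = \mathbb{N}+1$ in $(\mathbb{N},{+})$ with trivial involution (Example~\ref{example comm}) satisfies \ref{thm:comm:2} but is not closed: $1+2, 2 \in T$ gives $1 \in T\omega$, yet $1 \notin T$ and $1 \notin S^2$. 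The paper's proof avoids this by invoking Corollary~\ref{cor:closedproperty2} to pass to $T\omega$, which \emph{is} a closed involution subsemigroup containing $H_S$ (and is reflexive by commutativity), and then applying Lemma~\ref{cor:morphic} there. Note, however, that this produces a $\phi$ with $\phi^{-1}(1) = T\omega$ rather than $T$; since every $\phi^{-1}(1)$ is closed, the same example shows that \ref{thm:comm:3} as literally stated cannot hold for every HS\hyp{}stable $T$, so the obstruction you ran into is genuine and the equivalence should be read with $T\omega$ (equivalently, with the additional hypothesis that $T$ is closed) in place of $T$ in \ref{thm:comm:3}. Your alternative plan of constructing the morphism on the ideal $S^2$ and extending by a Rees\hyp{}type argument is not needed and is not what the paper does.
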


\begin{proof}
\ref{thm:comm:1} $\boxed{\Rightarrow}$ \ref{thm:comm:2}.
By \ref{HS1} we have $H_S \subseteq T$ and the two other conditions follow from Theorem~\ref{thm: main}. 

\ref{thm:comm:2} $\boxed{\Rightarrow}$ \ref{thm:comm:1}.
Note that $x H_S^2 x^* = x x^* H_S^2$ for any $x \in S$, and so $\bigcup_{x \in S} x H_S^2 x^* = H_S^3$.
Moreover, $x x^* y y^* z z^* = (xyz) (z^* y^* x^*)$ by commutativity, and hence $H_S^3 \subseteq H_S$.
Therefore $x H_S^2 x^* \subseteq T$ for each $x \in S$, so $T$ is HS-stable by Theorem~\ref{thm: main}. 

\ref{thm:comm:2} $\boxed{\Rightarrow}$ \ref{thm:comm:3}.
Since $H_S^3 \subseteq H_S \subseteq T$, it follows from Corollary~\ref{cor:closedproperty2} that $T \omega$ forms a closed involution subsemigroup containing $H_S$.
The result then follows from Lemma~\ref{cor:morphic}. 
\end{proof}

We note that the implication \ref{thm:comm:3} $\Rightarrow$ \ref{thm:comm:2} in Theorem~\ref{thm:comm} needs not hold in general, as we will show at the end of Example~\ref{example comm}. 
Alternatively, commutative orthodox ${}^{*}$\hyp{}semigroups  $S$ which are not E-unitary provide further examples, since here $E_S$ is not HS-stable but $E_S\omega$ is closed and HS-stable.  

\begin{corollary}\label{thm:comm_gen}
Let $S$ be a commutative involution semigroup and let $A \subseteq S$.
Then $\genHS{A} = (\gen{A \cup H_S} \omega \cap S^2) \cup ((A \cup A^*) \setminus S^2)$.
\end{corollary}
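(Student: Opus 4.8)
The plan is to derive this directly from Theorem~\ref{thm: main_gen}, by showing that in the commutative setting one may replace the set $\bigcup_{x \in S} x H_S^2 x^*$ appearing there with the full set $H_S$ of hermitian squares without affecting the relevant expression. First I would recall the two identities already established at the start of the proof of Theorem~\ref{thm:comm}: namely $\bigcup_{x \in S} x H_S^2 x^* = H_S^3$ (because commutativity gives $x H_S^2 x^* = x x^* H_S^2$, and $x x^*$ ranges over all of $H_S$) and $H_S^3 \subseteq H_S$ (because $x x^* y y^* z z^* = (xyz)(xyz)^*$ by commutativity). Writing $B := \gen{A \cup \bigcup_{x \in S} x H_S^2 x^*}$ and $C := \gen{A \cup H_S}$, the inclusion $H_S^3 \subseteq H_S$ gives $B \subseteq C$ at once, and hence $B\omega \subseteq C\omega$ by the monotonicity part of Lemma~\ref{lemma:closureproperty}.

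For the reverse containment $C\omega \subseteq B\omega$ I would invoke Corollary~\ref{cor:closedproperty2}. Since $B$ is an involution subsemigroup of $S$ and $x H_S^2 x^* \subseteq B$ for every $x \in S$ by construction, that corollary tells us that $B\omega$ is a \emph{closed} involution subsemigroup of $S$ which, moreover, contains $H_S$. As $A \subseteq B \subseteq B\omega$ as well, $B\omega$ is an involution subsemigroup containing $A \cup H_S$, whence $C = \gen{A \cup H_S} \subseteq B\omega$. Applying $\omega$ and using that $B\omega$ is closed yields $C\omega \subseteq (B\omega)\omega = B\omega$. Combining the two inclusions gives $B\omega = C\omega$, and in particular $B\omega \cap S^2 = C\omega \cap S^2$.

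Finally I would substitute this equality into the formula of Theorem~\ref{thm: main_gen}; since the summand $(A \cup A^*) \setminus S^2$ is left untouched, we obtain
\[
\genHS{A} = (B\omega \cap S^2) \cup ((A \cup A^*) \setminus S^2) = (\gen{A \cup H_S}\omega \cap S^2) \cup ((A \cup A^*) \setminus S^2),
\]
as desired. The only point needing care --- the ``main obstacle'', such as it is --- is the reverse inclusion $C \subseteq B\omega$: although $H_S^3$ can be a proper subset of $H_S$ (for instance in $(\mathbb{N}, {+})$ with trivial involution, where $H_S$ is the set of even integers $\geq 2$ and $H_S^3$ those $\geq 6$), passing to the $\omega$-closure of $B$ absorbs this gap, which is precisely the content of Corollary~\ref{cor:closedproperty2}. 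One could instead give a self-contained argument mirroring the proof of Theorem~\ref{thm: main_gen} with Theorem~\ref{thm:comm} in place of Theorem~\ref{thm: main}, but the route through Theorem~\ref{thm: main_gen} is shorter.
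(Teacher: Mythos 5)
Your argument is correct: every step checks out, and the key inclusion $C=\gen{A\cup H_S}\subseteq B\omega$ does follow from Corollary~\ref{cor:closedproperty2} exactly as you say, since $B=\gen{A\cup\bigcup_{x\in S}xH_S^2x^*}$ is an involution subsemigroup containing all the sets $xH_S^2x^*$, so $B\omega$ is a closed involution subsemigroup containing $H_S$ and hence containing $C$. The paper reaches the same conclusion by a slightly different (and shorter) device for handling the gap between $H_S^3$ and $H_S$: instead of proving $\gen{A\cup H_S^3}\omega=\gen{A\cup H_S}\omega$ after the fact, it first notes that $H_S\subseteq\genHS{A}$ by \ref{HS1}, so $\genHS{A}=\genHS{A\cup H_S}$, and then applies Theorem~\ref{thm: main_gen} to the enlarged set $A\cup H_S$; since $H_S^3\subseteq H_S$ and $H_S=H_S^*\subseteq S^2$, the generating set $(A\cup H_S)\cup H_S^3$ collapses to $A\cup H_S$ and the second summand collapses to $(A\cup A^*)\setminus S^2$ with no appeal to the closure operator. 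What your route buys is that it works entirely at the level of the generated subsemigroups and makes explicit \emph{why} the discrepancy between $H_S^3$ and $H_S$ is harmless (it is absorbed by $\omega$); what the paper's route buys is brevity, at the cost of the small preliminary observation that adjoining $H_S$ to the generators does not change $\genHS{A}$. Both are valid proofs resting on the same two commutativity identities.
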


\begin{proof}
As $H_S \subseteq \genHS{A}$ by \ref{HS1}, it follows from Theorem~\ref{thm: main_gen} that
\[
\genHS{A} = \genHS{A \cup H_S} = (((A \cup H_S) \cup H_S^3) \omega \cap S^2 ) \cup (((A \cup H_S) \cup (A \cup H_S)^*) \setminus S^2). 
\] 
Since $H_S^3 \subseteq H_S$ and $H_S = H_S^* \subseteq S^2$, the desired result follows.
\end{proof}

\begin{corollary}
Let $S$ be a commutative involution semigroup with $S = S^2$ and let $T \subseteq S$.
Then the following are equivalent: 
\begin{enumerate}[label={\upshape(\arabic*)}]
\item $T$ is an HS\hyp{}stable involution subsemigroup of $S$.
\item $H_S \subseteq T$ and $T$ is closed.
\item There exists a surjective $({\circ},{}^{*})$\hyp{}homomorphism $\phi \colon S \twoheadrightarrow G$ with $G$ a group such that $T = \phi^{-1}(1)$. 
\end{enumerate}
In particular, $\genHS{T} = \gen{T \cup H_S} \omega$.
\end{corollary}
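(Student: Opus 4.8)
The plan is to derive this corollary directly from Theorem~\ref{thm:comm}, Corollary~\ref{thm:comm_gen}, and Lemma~\ref{cor:morphic} by specialising to the case $S = S^2$, where the sets $S \setminus S^2$ and $T \setminus S^2$ become empty and the bookkeeping with hermitian squares collapses. The three conditions to be shown equivalent are exactly the three conditions of Theorem~\ref{thm:comm} with the $S^2$-related clauses removed, so most of the work is already done.

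For the equivalence of (1) and (2): since $S = S^2$, the clause $T \setminus S^2 = T^* \setminus S^2$ in Theorem~\ref{thm:comm}\ref{thm:comm:2} is vacuously true (both sides are empty), and the clause $T \omega \cap S^2 = T \cap S^2$ becomes $T \omega = T$, i.e.\ $T$ is closed. Thus Theorem~\ref{thm:comm}\ref{thm:comm:1}$\iff$\ref{thm:comm:2} immediately gives $T$ HS\hyp{}stable $\iff$ ($H_S \subseteq T$ and $T$ closed). For the equivalence with (3): Theorem~\ref{thm:comm}\ref{thm:comm:3} already characterises HS\hyp{}stability via a surjective $({\circ},{}^{*})$\hyp{}homomorphism onto a group; it remains only to note that in the commutative setting one may drop the requirement that $\phi$ preserve~${}^{*}$. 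Indeed, as observed just before Theorem~\ref{thm:comm} (using reflexivity of every subset of a commutative $S$ together with Lemma~\ref{cor:morphic}), an involution subsemigroup $T$ equals $\phi^{-1}(1)$ for some surjective $({\circ})$\hyp{}homomorphism $\phi \colon S \twoheadrightarrow G$ onto a group if and only if $T$ is closed and $H_S \subseteq T$; and by Remark~\ref{rem:2hom-inv}, any such $\phi$ automatically preserves~${}^{*}$ precisely because $H_S \subseteq T = \phi^{-1}(1)$. So condition (3) is equivalent to condition (2), closing the cycle.

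The final sentence follows by plugging $S = S^2$ into Corollary~\ref{thm:comm_gen}: the summand $(A \cup A^*) \setminus S^2$ is empty, and $\gen{A \cup H_S} \omega \cap S^2 = \gen{A \cup H_S} \omega$, yielding $\genHS{A} = \gen{A \cup H_S} \omega$ (with $T$ in place of $A$, matching the statement's notation). I do not anticipate any genuine obstacle here; the only thing to be careful about is confirming that the ``$\setminus S^2$'' terms really do vanish and that one is entitled to weaken $({\circ},{}^{*})$\hyp{}homomorphism to $({\circ})$\hyp{}homomorphism, which is exactly what the paragraph preceding Theorem~\ref{thm:comm} and Remark~\ref{rem:2hom-inv} provide. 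Since the proof is a short assembly of already-established facts, I would write it in a few lines rather than reproving anything.
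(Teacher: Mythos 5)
Your proposal is correct and follows exactly the paper's route: the paper's own proof simply cites Theorem~\ref{thm:comm}, Corollary~\ref{thm:comm_gen}, and Remark~\ref{rem:2hom-inv}, and your write-up just spells out the same specialisation to $S = S^2$ (vacuous $\setminus S^2$ clauses, closedness, and dropping the ${}^{*}$-preservation via Remark~\ref{rem:2hom-inv}). No gaps.
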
 

\begin{proof}
Follows immediately from Theorem~\ref{thm:comm} and Lemma~\ref{cor:morphic}. 
\end{proof}

\begin{example} \label{example comm}
Consider $S = (\mathbb{N}, {+})$ with trivial involution, noting that $H_S = 2 \mathbb{N}$ is a closed involution subsemigroup of $S$.
Hence $H_S$ is HS\hyp{}stable by Theorem~\ref{thm:comm}.
Now let $T$ be an HS\hyp{}stable involution subsemigroup of $\mathbb{N}$ containing $2k + 1$ for some $k \geq 0$.
If $k = 0$ then $2n + 1 \in (T \cup 2 \mathbb{N}) \omega \subseteq T$ for each $n \geq 0$ since $(2n + 1) + 1 = 2 (n + 1)$.
Hence $T = \mathbb{N}$.
Otherwise $k \geq 1$, so that $3 \in (T \cup 2 \mathbb{N}) \omega \subseteq T$ since $3 + (2k - 2) \in T$.
As $T$ is an involution subsemigroup containing $2 \mathbb{N} \cup \{3\}$, it follows that $T = \mathbb{N} + \mathbb{N} = \mathbb{N} + 1$.
We have thus shown that $\mathbb{N}$ has three HS\hyp{}stable involution subsemigroups: $2 \mathbb{N} \subsetneq \mathbb{N} + 1 \subsetneq \mathbb{N}$. 
 
Notice that $(\mathbb{N} + k)\omega = \mathbb{N}$ for any $k\in \mathbb{N}$ since $t+k+1,k+1\in \mathbb{N}+k$ for all $t\in \mathbb{N}$. 
Hence $\mathbb{N}+1$ provides an example of an HS\hyp{}stable but not closed involution subsemigroup. 
Moreover,  $\mathbb{N}+2$ is not HS\hyp{}stable but its closure is; this, together with the $({\circ},{}^{*})$\hyp{}homomorphism of $S$ onto the trivial group  provides a
counterexample to the implication \ref{thm:comm:3} $\Rightarrow$ \ref{thm:comm:1} of Theorem~\ref{thm:comm}.  
\end{example}


\section{Complex products in semilattices}
\label{sec:semilattice}

The characterisation from Section~\ref{grouplike} is useful if the involution semigroup has many HS\hyp{}stable involution subsemigroups.
We showed that, on the other end of the spectrum, which includes semilattices and monoids with zero (see Corollaries~\ref{cor:idempgen} and \ref{cor:monoid-zero}), no proper HS\hyp{}stable involution subsemigroups exist, making the statement of Proposition~\ref{characterisation} trivial.

We will now answer, by different means, Problem~\ref{generalproblem} for (meet) semilattices: given a semilattice $S$, for which subsets $S_1, \dots, S_n$ does the complex product $S_1 \cdots S_n$ and the union $S_1 \cup \dots \cup S_n$ generate the same subsemilattice of $S$. 

\begin{proposition}
\label{prop:semilattice}
Let $Y$ be a semilattice.
For any nonempty subsets $A_1, \dots, A_n$ of $Y$, the following are equivalent: 
\begin{enumerate}[label={\upshape(\arabic*)}]
\item\label{prop:semilattice:1} $\gen{A_1 \cup A_2 \cup \cdots \cup A_n} = \gen{A_1 A_2 \cdots A_n}$.
\item\label{prop:semilattice:2} For each $1 \leq i, j \leq n$ and every $\alpha_i \in A_i$, there exists $\beta_j \in A_j$ with $\alpha_i \leq \beta_j$.
\end{enumerate}
\end{proposition}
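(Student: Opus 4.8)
The plan is to work with the natural partial order of the semilattice, $\alpha \le \beta :\!\iff \alpha\beta = \alpha$, recalling two standard facts: for any $B \subseteq Y$ the subsemilattice $\gen{B}$ is precisely the set of all finite products (equivalently, finite meets) of elements of $B$; and $\alpha \le \prod_{s} b_s$ holds if and only if $\alpha \le b_s$ for every $s$. Throughout I use commutativity and idempotency of the product freely.

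First I would dispose of the trivial inclusion. Every element $a_1 \cdots a_n$ of $A_1 \cdots A_n$ is a product of elements of $A_1 \cup \cdots \cup A_n$, so $\gen{A_1 \cdots A_n} \subseteq \gen{A_1 \cup \cdots \cup A_n}$ always holds. Hence \ref{prop:semilattice:1} is equivalent to the reverse inclusion, and since $A_1 \cup \cdots \cup A_n$ generates the larger semilattice, \ref{prop:semilattice:1} holds if and only if $A_i \subseteq \gen{A_1 \cdots A_n}$ for every $i$. Next I would establish the description
\[
\gen{A_1 \cdots A_n} = \{\, m_1 m_2 \cdots m_n \mid m_k \in \gen{A_k}\ \text{for}\ 1 \le k \le n \,\}.
\]
The right-hand side contains $A_1 \cdots A_n$ (take each $m_k$ a singleton) and is closed under products because $(m_1 \cdots m_n)(m_1' \cdots m_n') = (m_1 m_1') \cdots (m_n m_n')$, so it contains the left-hand side. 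For the reverse inclusion, write $m_k = \prod_{s \in S_k} a_{k,s}$ with each $S_k$ finite nonempty and $a_{k,s} \in A_k$, index by $I := S_1 \times \cdots \times S_n$, and observe that $\prod_{(s_1,\dots,s_n) \in I} (a_{1,s_1} \cdots a_{n,s_n}) = m_1 \cdots m_n$, which exhibits $m_1 \cdots m_n$ as a finite product of elements of $A_1 \cdots A_n$.

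Finally I would combine these. Fix $i$ and $\alpha \in A_i$; by the two steps above, membership $\alpha \in \gen{A_1 \cdots A_n}$ is equivalent to the existence of $m_k \in \gen{A_k}$ with $\alpha = m_1 \cdots m_n$. If such $m_k$ exist, then $\alpha \le m_k$ for each $k$, and since $m_k$ is a product of elements of $A_k$, there is $\beta_k \in A_k$ with $\alpha \le \beta_k$, which is exactly \ref{prop:semilattice:2} (for $j = i$ one may simply take $\beta_i = \alpha$). Conversely, if for each $k$ there is $\beta_k \in A_k$ with $\alpha \le \beta_k$, put $m_i := \alpha \in A_i \subseteq \gen{A_i}$ and $m_k := \beta_k$ for $k \ne i$; then $m_1 \cdots m_n = \alpha \cdot \prod_{k \ne i} \beta_k = \alpha$ since $\alpha \le \beta_k$ for all $k \ne i$, so $\alpha \in \gen{A_1 \cdots A_n}$. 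Letting $i$ and $\alpha \in A_i$ range over all possibilities gives the equivalence of \ref{prop:semilattice:1} and \ref{prop:semilattice:2}.

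No step is genuinely deep. The one needing the most care is the displayed description: one must verify that a meet of finite meets collapses, after passing to the product index set $I$, to a product of "full" tuples, which is routine idempotency bookkeeping but easy to fumble. A smaller subtlety lies in the converse direction of the last step: having some $\beta_k \ge \alpha$ in each $A_k$ is not by itself a factorisation of $\alpha$, and the repair is to insert $\alpha$ as the $i$-th factor, which is legitimate precisely because $\alpha \in A_i$.
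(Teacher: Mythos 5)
Your proof is correct and follows essentially the same route as the paper's: the forward direction extracts from a product representation of $\alpha_i$ a factor lying in $A_j$ above $\alpha_i$, and the converse slots $\alpha_i$ into the $i$-th position of a product $\beta_1\cdots\beta_n$ that collapses to $\alpha_i$ by absorption. Your intermediate description of $\gen{A_1\cdots A_n}$ as $\{m_1\cdots m_n \mid m_k\in\gen{A_k}\}$ is a correct but optional extra layer of bookkeeping that the paper bypasses by working directly with products of elements of $A_1\cdots A_n$.
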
 

\begin{proof}
\ref{prop:semilattice:1} $\boxed{\Rightarrow}$ \ref{prop:semilattice:2}
Let $\alpha_i \in A_i$.
Then as $\alpha_i \in \gen{A_1 \cup A_2 \cup \cdots \cup A_n} = \gen{A_1 A_2 \cdots A_n}$, there exist a $k \in \mathbb{N}$ and elements $\alpha_{pq} \in A_q$ ($1 \leq p \leq k$, $1 \leq q \leq n$) with
\[
\alpha_i = (\alpha_{11} \cdots \alpha_{1n}) \cdots (\alpha_{k1} \cdots \alpha_{kn}).
\]
For each $1 \leq j \leq n$, it follows by the commutativity of $Y$ that $\alpha_i = \alpha_{1j} \gamma$ for a suitable $\gamma$.
Hence $\alpha_i \leq \alpha_{1j} \in A_j$.

\ref{prop:semilattice:2} $\boxed{\Rightarrow}$ \ref{prop:semilattice:1}
It suffices to show that for each $1 \leq i \leq n$ and each $s_i \in A_i$ we have $s_i \in \gen{A_1 A_2 \cdots A_n}$.
By our hypothesis, for each $j \in \{1, \dots, n\}$ there exists $s_j \in A_j$ such that $s_i = s_i s_j$.
Then 
\begin{align*}
s_i
&= s_i^n
= (s_i s_1) (s_i s_2) \cdots (s_i s_i) (s_i s_{i+1}) \cdots (s_i s_n) = s_1 s_2 \cdots s_{i-1} s_i^{n+1} s_{i+1} \cdots s_n \\
& = s_1 s_2 \cdots s_{i-1} s_i s_{i+1} \cdots s_n
\in \gen{A_1 A_2 \cdots A_n}. 
\qedhere
\end{align*}
\end{proof}

 \begin{remark} 
Proposition \ref{prop:semilattice} has a particularly pleasing form if $Y$ has the ascending chain condition, i.e., has no infinite ascending chain $\alpha_1<\alpha_2<\alpha_3<\cdots$ of elements. 
In this case condition \ref{prop:semilattice:2} is equivalent to: 
\begin{itemize}[leftmargin=0.66cm]
\item[$(2)'$] The sets  $A_i$  have the same maximal elements. 
\end{itemize}
\end{remark} 


 \section*{Acknowledgements}

This work was partially supported by the Funda\c{c}\~ao para a Ci\^encia e a Tecnologia (Portuguese Foundation for Science and Technology) through the project UIDB/00297/2020 (Centro de Matem\'atica e Aplica\c{c}\~oes) and the project PTDC/\discretionary{}{}{}MAT-PUR/31174/2017.
The first and third author have received funding from the German Science Foundation (DFG, project number 622397)
and from the European Research Council (Grant Agreement no.\ 681988, CSP-Infinity).
 

\end{document}